\documentclass[12pt,reqno]{amsart}

\usepackage{enumerate}
\usepackage[margin=1in]{geometry}
\usepackage{ifpdf}
\usepackage{amsmath}
\usepackage{amsfonts}
\usepackage{amssymb}
\usepackage{amsthm}
\usepackage{mathdots}
\usepackage[hyperfootnotes=false]{hyperref}
\usepackage{setspace}
\usepackage{amsrefs}

\newcommand{\ignore}[1]{}



\newcommand{\abs}[1]{\left\lvert {#1} \right\rvert}
\newcommand{\norm}[1]{\left\lVert {#1} \right\rVert}

\newcommand{\C}{{\mathbb{C}}}
\newcommand{\R}{{\mathbb{R}}}

\newcommand{\bB}{{\mathbb{B}}}

\newcommand{\bP}{{\mathbb{P}}}

\newcommand{\bV}{{\mathbb{V}}}

\newcommand{\sZ}{{\mathcal{Z}}}



\newtheorem{thm}{Theorem}[section]

\newtheorem{prop}[thm]{Proposition}

\newtheorem{cor}[thm]{Corollary}

\newtheorem{lemma}[thm]{Lemma}

\theoremstyle{definition}

\theoremstyle{remark}
\newtheorem{remark}[thm]{Remark}

\author{Ji\v{r}\'i Lebl}
\thanks{The author was in part supported by NSF grant DMS 0900885.}
\address{Department of Mathematics, University of Illinois
at Urbana-Champaign, 
Urbana, IL 61801, USA}
\email{jlebl@math.uiuc.edu}
\date{December 14, 2010}

\ifpdf
\hypersetup{
  pdftitle={Normal forms, Hermitian operators, and CR maps of spheres and hyperquadrics},
  pdfauthor={Jiri Lebl}
}
\fi

\title[Normal forms, Hermitian operators, and CR maps]%
{Normal forms, Hermitian operators, and CR maps of spheres and hyperquadrics}

\begin{document}


\begin{abstract}
We prove and organize some results on the normal forms of Hermitian operators
composed with the Veronese map.  We apply this general
framework to prove two specific theorems in CR geometry.
First, extending a theorem of Faran,
we classify all real-analytic CR maps between
any hyperquadric in $\C^2$ and
any hyperquadric in $\C^3$, resulting in a finite
list of equivalence classes.
Second,
we prove that all
degree-two CR maps of spheres in all dimensions are spherically equivalent to a 
monomial map, thus obtaining an elegant classification
of all degree-two CR sphere maps.
\end{abstract}

\maketitle
\vspace*{-15pt}



\section{Introduction} \label{section:intro}

The purpose of this paper is to use
normal forms of 
Hermitian operators in the study of CR maps.
We strengthen a useful
link between linear algebra and several complex variables and
apply the techniques discussed to
the theory of rational CR maps of spheres and hyperquadrics.
After discussing the classification of such CR maps in terms of their
Hermitian forms, we turn to the
classification of 
Hermitian forms arising from
degree-two rational CR maps of
spheres and hyperquadrics.  For degree-two maps, the classification is the
same as the classification of a pair of Hermitian forms up to
simultaneous
$*$-congruence (matrices $A$ and $B$ are $*$-congruent if there exist a nonsingular
matrix $X$ such that $X^*AX = B$, where $X^*$ is the conjugate transpose),
a classical problem in linear algebra with a solution going back to 1930s,
see the survey \cite{LancasterRodman}.

We will apply the theory developed in this paper to two problems.  First,
extending a result by Faran~\cite{Faran:B2B3}, in
Theorem~\ref{thm:hqclass}, we will finish the classification
of all CR maps of hyperquadrics in dimensions $2$ and $3$.
Second, Ji and Zhang~\cite{JiZhang} classified
degree-two rational CR maps of spheres from source dimension 2.
In Theorem~\ref{thm:deg2class} we 
extend this result to arbitrary source dimension and give an elegant version
of the theorem by proving that all degree-two rational CR maps of spheres
in any dimension are spherically equivalent to a monomial map.
We also study the real-algebraic version of the
CR maps of hyperquadrics
problem in dimensions 2 and 3, arising in the case of
diagonal Hermitian forms.

In CR geometry we often think of a real valued
polynomial $p(z,\bar{z})$ on complex space as the composition of the Veronese
map $\sZ$ with a Hermitian form $B$.  That is, $p(z,\bar{z}) =
\langle B \sZ, \sZ \rangle$.  See \S~\ref{section:hermforms} for more on this
setup.  Writing $B$ as a sum of rank one matrices we will find that $p$
can also be thought of as a composition of a holomorphic map composed with
a diagonal Hermitian form.  When we divide the form 
$\langle B \sZ, \sZ \rangle$ by the defining equation of the source
hyperquadric we get a pair of Hermitian forms.  When this pair is
put into canonical
form we obtain a canonical form of the map up to a natural equivalence
relation.  A crucial point is that for degree-two maps the
two forms are linear in $z$.

We thus make a connection between real polynomials and holomorphic maps to
hyperquadrics. 
A hyperquadric is the zero set of a diagonal Hermitian form, and
is a basic example of a real hypersurface in complex space.  
Usually given in nonhomogeneous coordinates, the hyperquadric
$Q(a,b)$ is defined as
\begin{equation}
Q(a,b) := \{ z \in \C^{a+b} \mid
\abs{z_1}^2 + \cdots + \abs{z_a}^2
- \abs{z_{a+1}}^2 - \cdots - \abs{z_{a+b}}^2 = 1
\} .
\end{equation}
$Q(a,b)$ is a hypersurface only when $a \geq 1$.  Also 
$Q(n,0)$ is the sphere $S^{2n-1}$.

We introduce the natural notion of equivalence for
CR maps of hyperquadrics,
which we will call Q-equivalence.
A map $f \colon Q(a,b) \to Q(c,d)$ is CR if it is continuously differentiable
and satisfies the tangential Cauchy-Riemann equations.
A real-analytic CR map is a restriction of a holomorphic map.
See \cites{BER:book,DAngelo:CR} for more information.
Let $U(a,b)$ be the set of automorphisms of the complex projective
space $\bP^{a+b}$ that preserve
$Q(a,b)$.  In homogeneous coordinates these automorphisms are
invertible matrices,
or linear fractional when working in $\C^{a+b}$.  We will say two CR
maps $f$ and $g$ taking $Q(a,b)$ to $Q(c,d)$ are 
\emph{Q-equivalent} if there are $\tau \in U(c,d)$,
$\chi \in U(a,b)$ such that $f \circ \chi = \tau \circ g$.
In the case of spheres, Q-equivalence is commonly
called \emph{spherical equivalence}.

The problem of classifying CR maps of hyperquadrics has a long history.  For
the sphere case, see for
example~\cites{DLP,DKR,HJX,DAngelo:CR,D:prop1,D:poly,DL:families,DL:complex}
and the references within.  For the general hyperquadric case
see~\cites{BH,HJX,BEH}
and the references within.

Again, let $f \colon Q(a,b) \to Q(c,d)$ be a CR map.
We will
first study the case $a+b=2$ and $2 \leq c+d \leq 3$.
We note that $Q(2,0)$ is equivalent to $Q(1,1)$ by the map
$(z,w) \mapsto (1/z,w/z)$.  Hence we only need to consider $Q(2,0)$
as our source.  Similarly, $Q(1,2)$ is equivalent to $Q(3,0)$.
When $c+d = 2$, then we need only consider maps from ball to ball,
which are Q-equivalent to the identity by a theorem of
Pin\v{c}uk~\cite{Pincuk}.
Faran classified all planar maps in~\cite{Faran:B2B3} and used this
result to classify all ball maps in dimensions 2 and 3.


\begin{thm}[Faran~\cite{Faran:B2B3}] \label{faranb2b3thm}
Let $U \subset Q(2,0)$ be connected and open.
Let $f \colon U \to Q(3,0)$ be nonconstant $C^3$ CR map.
Then $f$ is Q-equivalent to exactly one of the following maps:
\begin{enumerate}[(i)]
\item $(z,w) \mapsto (z,w,0)$.
\item $(z,w) \mapsto (z,zw,w^2)$,
\item \label{faran3map} $(z,w) \mapsto (z^2,\sqrt{2}\,zw,w^2)$,
\item \label{faran4map} $(z,w) \mapsto (z^3,\sqrt{3}\,zw,w^3)$.
\end{enumerate}
\end{thm}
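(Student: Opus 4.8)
The plan is to reduce the statement to a finite classification problem about Hermitian operators composed with the Veronese map, and then dispatch that problem with the normal form results of this paper. First I would pass from CR maps to rational maps of balls: by the regularity theory for CR sphere maps at this codimension (Forstneri\v{c}, and others), a nonconstant $C^3$ CR map of a connected open piece of $Q(2,0)=S^3$ into $Q(3,0)=S^5$ extends to a proper rational holomorphic map $F\colon\bB^2\to\bB^3$, and Q-equivalence of such maps coincides with spherical equivalence of the extensions. Writing $F=(p_1,p_2,p_3)/q$ in lowest terms, the boundary condition becomes the polynomial identity $\abs{p_1}^2+\abs{p_2}^2+\abs{p_3}^2-\abs{q}^2=0$ on $\{\abs{z}^2+\abs{w}^2=1\}$. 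Degree estimates for proper rational maps from $\bB^2$ (see, e.g., \cite{DKR} and the references therein) then give $\deg F\le 3$, so each $p_j$ and $q$ has degree at most $3$ and $\norm{p}^2-\abs{q}^2$ has bidegree at most $(3,3)$.

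The linear-algebra core is as in \S\ref{section:hermforms}: the identity above is equivalent to
\[
\abs{p_1}^2+\abs{p_2}^2+\abs{p_3}^2-\abs{q}^2 = \bigl(\abs{z}^2+\abs{w}^2-1\bigr)\langle A\sZ,\sZ\rangle ,
\]
where $\sZ=(1,z,w,z^2,zw,w^2)$ and $A$ is a Hermitian operator, and the Q-equivalence class of $F$ is encoded by the pair of Hermitian forms extracted from this factorization together with the defining form of the source quadric. Since pre- and post-composition by elements of $U(2,0)$ and $U(3,0)$ acts on this data through the corresponding action on Hermitian operators composed with the Veronese map, the normal form theorems reduce the classification to finitely many canonical configurations, indexed by the ranks and inertias of the forms involved and constrained by the facts that $\norm{p}^2$ is a sum of three squares and $\abs{q}^2$ a single square.

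It then remains to run the case analysis. In each canonical configuration the identity becomes an explicit finite system; the rank and signature constraints eliminate all but a handful of candidates, and the survivors, after one further normalization, are precisely the maps (i)--(iv), the degenerate and lower-rank intermediate cases collapsing onto earlier entries of the list. Finally, no two of (i)--(iv) are Q-equivalent: the map degrees $1$, $2$, $2$, $3$ already separate (i), then (ii) and (iii) as a pair, then (iv); and (ii) is distinguished from (iii) because the image of (iii) lies on a quadric hypersurface of $\bP^3$ whereas that of (ii) lies on no quadric, a property preserved under the projective-linear action of $U(3,0)$.

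I expect the main obstacle to be the combined steps of the previous two paragraphs: organizing the $U(2,0)\times U(3,0)$ action on the associated pair of Hermitian forms so that the normal form theorem applies cleanly, and then verifying that the resulting list of configurations is exhaustive --- in particular that no spurious solutions survive and that the degree-three branch really collapses to the single map (iv). Securing the sharp bound $\deg F\le 3$ by elementary means, rather than quoting a classification it is meant to support, is the other delicate point; an alternative that sidesteps both is Faran's original moving-frame analysis of the Chern--Moser connections induced by $F$, which is finite dimensional and can be enumerated directly, though it is less in the spirit of the methods developed here.
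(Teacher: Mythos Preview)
The paper does not prove this theorem at all: it is quoted as Faran's result, with the remark that the local version follows either from Faran's original argument or by invoking Forstneri\v{c}'s extension theorem. So there is no ``paper's own proof'' to compare against; the relevant question is whether your outline stands on its own.

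It does not, and the gap is precisely where you suspect. The normal-form machinery of \S\ref{section:hermforms}--\S\ref{section:deg2maps} reduces the Q-equivalence problem to simultaneous $*$-congruence of a pair $(J,A)$ \emph{only when the map has degree two}, because only then does the source automorphism $X$ act on $A$ by $X^*AX$. For a degree-$d$ map one has $\langle B\sZ_d,\sZ_d\rangle = \langle A\sZ_{d-1},\sZ_{d-1}\rangle\langle Jz,z\rangle$, and the source acts on $A$ through $\hat{X}^*A\hat{X}$ with $\sZ_{d-1}(Xz)=\hat{X}\sZ_{d-1}(z)$; for $d=3$ this is not a $*$-congruence by an arbitrary invertible matrix, and the Horn--Sergeichuk canonical form does not apply. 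The paper says this explicitly (``When $d=2$, then $\hat{X}=X$ and matters become simpler''), and when it needs a degree-three classification for its own Theorem~\ref{thm:hqclass} it abandons the Hermitian-pair method entirely and instead imports Faran's classification of planar maps (Theorem~\ref{thm:planar}) together with a self-duality argument. So your sentence ``the normal form theorems reduce the classification to finitely many canonical configurations'' is unjustified in the degree-three branch, which is exactly the branch that produces map~(iv).

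The degree bound is a second, independent gap. The estimate $\deg F\le 3$ for rational CR maps $S^3\to S^5$ is not available from \cite{DKR}, which treats monomial maps; for general rational maps it is essentially the output of Faran's theorem, not an input to it. You flag this yourself, but without resolving it the argument is circular. The alternative you mention at the end---Faran's moving-frame analysis---is in fact what produces both the planarity and the finite list, and is what the paper relies on (via Theorem~\ref{thm:planar}) even for its own results.
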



While Faran stated the theorem with the
assumption that
the map is defined on all of $Q(2,0)$, the slight generalization we give
follows from the same proof, or by appealing to the theorem of
Forstneri\v{c}~\cite{Forstneric}.
To completely classify all CR maps of hyperquadrics in dimensions 2 and 3,
it remains to classify maps from $Q(2,0)$ to $Q(2,1)$.



\begin{thm} \label{thm:hqclass}
Let $U \subset Q(2,0)$ be connected and open.
Let $f \colon U \to Q(2,1)$ be a nonconstant real-analytic CR map.
Then $f$ is Q-equivalent to exactly one of the following maps:
\begin{enumerate}[(i)]
\item \label{hqclass1}
$(z,w) \mapsto (z,w,0)$,
\item \label{hqclass2}
$(z,w) \mapsto (z^2,\sqrt{2}\,w,w^2)$,
\item \label{hqclass3}
$(z,w) \mapsto \left( \frac{1}{z}, \frac{w^2}{z^2}, \frac{w}{z^2} \right)$,
\item \label{hqclass5}
$(z,w) \mapsto \left(
\frac{z^2+\sqrt{3}\,zw+w^2-z}{w^2+z+\sqrt{3}\,w - 1} ,
\frac{w^2+z-\sqrt{3}\,w - 1}{w^2+z+\sqrt{3}\,w - 1} ,
\frac{z^2-\sqrt{3}\,zw+w^2-z}{w^2+z+\sqrt{3}\,w - 1}
\right)$,
\item \label{hqclass4}
$(z,w) \mapsto \left(
\frac{\sqrt[4]{2}(zw-iz)}{w^2+\sqrt{2}\,iw+1} ,
\frac{w^2-\sqrt{2}\,iw+1}{w^2+\sqrt{2}\,iw+1} ,
\frac{\sqrt[4]{2}(zw+iz)}{w^2+\sqrt{2}\,iw+1}
\right)$,
\item \label{hqclass6}
$(z,w) \mapsto
\left( \frac{2w^3}{3z^2+1} ,
\frac{z^3+3z}{3z^2+1},
\sqrt{3}\frac{wz^2-w}{3z^2+1}
\right)$,
\item \label{hqclass7}
$(z,w) \mapsto (1,g(z,w),g(z,w))$ for an arbitrary CR function $g$.
\end{enumerate}
\end{thm}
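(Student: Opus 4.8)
The plan is to translate the problem into linear algebra via the Hermitian form framework described in §\ref{section:hermforms}. A real-analytic CR map $f = (f_1,f_2,f_3)\colon U \to Q(2,1)$ extends to a rational map and satisfies $\abs{f_1}^2 + \abs{f_2}^2 - \abs{f_3}^2 = 1$ on $Q(2,0) = S^3$, i.e. $\abs{f_1}^2 + \abs{f_2}^2 - \abs{f_3}^2 - 1$ vanishes on the sphere. Clearing denominators and writing $f_j = p_j/q$ with $p_j,q$ holomorphic polynomials without common factor, this means $\abs{p_1}^2 + \abs{p_2}^2 - \abs{p_3}^2 - \abs{q}^2$ is divisible by $\abs{z}^2 + \abs{w}^2 - 1$ on $\C^2$. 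Writing the left-hand side as $\linnprod{A\sZ}{\sZ}$ for a Hermitian form $A$ in the Veronese variables $\sZ$, and dividing by the defining form of the source, we obtain a pair of Hermitian forms whose simultaneous $*$-congruence class is the Q-equivalence invariant of $f$. The first step, therefore, is to bound the degree of $f$: a theorem of Forstneri\v{c} (or the rational-degree bounds for hyperquadric maps) gives $\deg f \le 2(c+d) - 3 = 3$ in this target dimension, so $q$ and the $p_j$ have degree at most $3$.

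Next I would organize the argument by the degree and by the signature/rank data of the associated pair of forms. For degree one, Pin\v{c}uk-type rigidity together with the possibility of a degenerate (constant) image component forces $f$ to be linear, landing in case \eqref{hqclass1} or — if $f_1 \equiv f_3$, so that the first and last slots cancel — in the "degenerate" family \eqref{hqclass7}, where $f = (1,g,g)$ for an arbitrary CR function $g$ (here the target condition $\abs{1}^2 + \abs{g}^2 - \abs{g}^2 = 1$ is automatic). For $\deg f = 2$, the two Hermitian forms are \emph{linear} in $z$ (the crucial point emphasized in the introduction), so classifying them up to simultaneous $*$-congruence is exactly the classical canonical-form problem for a pencil of Hermitian forms, cf.\ \cite{LancasterRodman}; running through the finitely many canonical types and discarding those inconsistent with the rank-$3$ target and the positivity constraints coming from the sphere source yields the monomial map \eqref{hqclass2} and its "conjugate" \eqref{hqclass3} (the image of \eqref{hqclass2} under the source automorphism $(z,w)\mapsto(1/z,w/z)$ that identifies $Q(2,0)$ with $Q(1,1)$). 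For $\deg f = 3$ one does the same with a genuinely quadratic pencil: the relevant canonical forms are more numerous, but the constraint that the target has only one negative eigenvalue severely limits the signature of $A$, and the surviving cases produce precisely \eqref{hqclass5}, \eqref{hqclass4}, and \eqref{hqclass6}. Throughout, one uses that $Q(1,2)\cong Q(3,0)$ so that several a priori distinct-looking normalizations coincide.

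Having produced this list, the remaining work splits in two. \emph{Completeness}: one must check that every surviving canonical pair of forms actually arises from a genuine rational CR map (not merely a formal solution), which amounts to verifying that the holomorphic map reconstructed from the rank-one decomposition of the "positive part" of the pencil has the claimed denominator $q$ and maps the sphere into $Q(2,1)$; this is a direct computation for each of the seven items. \emph{Inequivalence}: one must show the seven maps lie in distinct Q-equivalence classes. For \eqref{hqclass1}–\eqref{hqclass6} this follows because Q-equivalence preserves the simultaneous $*$-congruence class of the associated pencil (and preserves the degree and the denominator's zero-set data), and the six pencils are pairwise non-$*$-congruent; case \eqref{hqclass7} is visibly separated from the rest since its image lies in a complex line.

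\medskip

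I expect the main obstacle to be the degree-$3$ case: unlike the degree-$2$ situation there is no reduction to a pencil of \emph{linear} forms, so one faces the full classification of pairs of Hermitian forms of size up to $10$ (the dimension of the degree-$\le 3$ Veronese space in two variables), and the combinatorics of canonical types is large. The saving grace — which I would exploit systematically — is the severe constraint that $Q(2,1)$ contributes only a single negative square, forcing the Hermitian form $A$ to have signature $(+\cdots+,-,-)$ with exactly two negative eigenvalues (one from $-\abs{f_3}^2$ and one from $-\abs{q}^2$); combined with divisibility by the source form this cuts the surviving canonical types down to the three families \eqref{hqclass5}, \eqref{hqclass4}, \eqref{hqclass6}. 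Verifying that these three are genuinely realized (and writing down explicit rational formulas, as above) is then the final bookkeeping step.
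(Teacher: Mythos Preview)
There is a genuine gap at the very first step: a real-analytic CR map $f\colon U\subset Q(2,0)\to Q(2,1)$ need \emph{not} extend to a rational map, and no Forstneri\v{c}-type degree bound of the form $\deg f\le 2(c+d)-3$ is available for hyperquadric targets of mixed signature. Indeed the family \eqref{hqclass7}, $(z,w)\mapsto(1,g,g)$ with $g$ an arbitrary CR function, already shows both claims fail. The paper does not assume rationality; instead it invokes Faran's observation (Lemma~\ref{lemma:selfdual}) that any such $f$ is \emph{planar} and self-dual, and then uses Faran's full classification of planar maps $\bP^2\to\bP^3$ (Theorem~\ref{thm:planar}). The non-rational planar classes (types D and E) are eliminated by self-duality and a rank argument respectively, and the low-rank case (derivative of rank $\le 1$) is what produces \eqref{hqclass7}. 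Only after this does one know the remaining maps are rational of degree at most $3$.

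Two further points. First, your degree bookkeeping is off: maps \eqref{hqclass5} and \eqref{hqclass4} are degree two, not three (look at the numerators and denominators), so the degree-two Hermitian-pencil analysis must produce all four of \eqref{hqclass2}--\eqref{hqclass4}, as in Lemma~\ref{lemma:deg2hq23}; only \eqref{hqclass6} is genuinely cubic. Second, your proposed degree-three method---``the same with a genuinely quadratic pencil''---does not reduce to a classical simultaneous $*$-congruence problem: when $d=3$ the source automorphism $X$ acts on $J$ by $X^*JX$ but on the quotient form $A$ by $\hat X^*A\hat X$ with $\hat X$ the induced map on $\sZ_2$, and these are different representations. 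The paper avoids this entirely for degree three: since Faran's list has a unique cubic planar class (type A.1, the map $(z_0^3,z_1^3,z_2^3,z_0z_1z_2)$), one computes $\operatorname{Aut}(f)$ for that specific map and checks which conjugate-isomorphisms $(\psi_2,\psi_3)$ yield Hermitian forms of the correct signatures (Lemma~\ref{lemma:deg3}).
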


Maps \eqref{hqclass1}, \eqref{hqclass2},
and \eqref{hqclass3}
come about from monomial maps (each component is a monomial)
from $Q(2,0)$ or $Q(1,1)$.
Classifying monomial maps leads to a problem in real-algebraic geometry,
which we discuss in \S~\ref{section:monomial}.

\begin{remark} \label{groupremark}
Interestingly, maps \eqref{faran3map}, \eqref{faran4map} of
Theorem~\ref{faranb2b3thm} and
\eqref{hqclass2} of Theorem~\ref{thm:hqclass} are group
invariant, although this fact is not used in this paper.
The map \eqref{hqclass2} of Theorem~\ref{thm:hqclass}
is one of the family of
CR maps of hyperquadrics invariant under a cyclic group obtained by D'Angelo~\cite{D:ginv}.
\end{remark}

\begin{remark} \label{regremark}
Faran proved his theorem with $C^3$ regularity.
To apply Faran's result on classification of planar maps we need
real-analytic CR maps.  For maps from the sphere to the $Q(2,1)$ hyperquadric
we get the map $(1,g,g)$ for an arbitrary CR function $g$.
Obviously, this map can have arbitrarily bad regularity.
\end{remark}

We will also
prove the following generalization of a theorem by Ji and
Zhang~\cite{JiZhang}.
The form of their maps was found by D'Angelo~\cite{D:prop1}
and Huang, Ji, Xu~\cite{HJX}.
The proof and the statement of the theorem by Ji and Zhang was more involved,
and covered only the case of source dimension 2.
A monomial map is a map whose every component is a monomial.  We do
not allow any monomial to have negative exponents.  The degree of a
rational map is the maximum degree of the numerator and the denominator, when
the map is
written in lowest terms.

The notion of spherical equivalence can be naturally extended to maps with
different target dimensions.  We will say two maps $f$ and $g$ with different
target dimensions (for example, target dimension of $f$ is smaller) are
\emph{spherically equivalent} if $f \oplus 0$ is spherically equivalent to
$g$ in the usual sense.

\begin{thm} \label{thm:deg2class}
Let $f \colon S^{2n-1} \to S^{2N-1}$, $n \geq 2$, be a rational CR map
of degree 2.  Then $f$ 
is spherically equivalent to a monomial map.

In particular, $f$ is equivalent to a map taking $(z_1,\ldots,z_n)$ to
\begin{multline} \label{alldeg2maps}
\bigl(
\sqrt{t_1} \, z_1 , 
\sqrt{t_2} \, z_2 , 
\ldots ,
\sqrt{t_n} \, z_n , 
\sqrt{1-t_1} \, z_1^2 , 
\sqrt{1-t_2} \, z_2^2 , 
\ldots ,
\sqrt{1-t_n} \, z_n^2 , 
\\
\sqrt{2-t_1-t_2} \, z_1z_2 , 
\sqrt{2-t_1-t_3} \, z_1z_3 , 
\ldots ,
\sqrt{2-t_{n-1}-t_n} \, z_{n-1}z_n \bigr) , 
\\
0 \leq t_1 \leq t_2 \leq \ldots \leq t_n \leq 1,
\quad (t_1,t_2,\ldots,t_n) \not= (1,1,\ldots,1) .
\end{multline}
Furthermore, all maps of the form \eqref{alldeg2maps} are mutually
spherically inequivalent for different parameters $(t_1,\ldots,t_n)$.
\end{thm}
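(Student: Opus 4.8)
The plan is to convert the degree-two hypothesis into a statement about a single Hermitian operator via the framework of \S~\ref{section:hermforms}, put that operator into normal form, and then read off both the list \eqref{alldeg2maps} and the invariant distinguishing its members.

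\emph{Setting up the correspondence.} By Forstneri\v{c}'s theorem $f$ extends holomorphically past $S^{2n-1}$; write $f = p/q$ in lowest terms and normalize $q(0) = 1$ (possible since $q(0) \neq 0$). That $f$ is a sphere map means $\norm{p}^2 - \abs{q}^2$ vanishes on $S^{2n-1}$, so $\norm{p}^2 - \abs{q}^2 = (\norm{z}^2 - 1)\, S(z,\bar z)$, and since $p,q$ have degree $\leq 2$ the polynomial $S$ has bidegree $\leq (1,1)$; thus $S(z,\bar z) = \linnprod{\widetilde B \zeta}{\zeta}$ for a unique Hermitian $(n+1)\times(n+1)$ matrix $\widetilde B$, where $\zeta = (1, z_1, \ldots, z_n)$ and, with $J := \operatorname{diag}(-1,1,\ldots,1)$, one has $\linnprod{J\zeta}{\zeta} = \norm{z}^2 - 1$. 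The computations of \S~\ref{section:hermforms} show that composing $f$ with a target automorphism of the ball multiplies $\norm{p}^2 - \abs{q}^2$, hence $\widetilde B$, by a positive constant; a target unitary leaves $\widetilde B$ fixed; and composing $f$ with a source automorphism, which in homogeneous coordinates is an $X \in U(n,1)$ (the group preserving $J$), sends $\widetilde B$ to $\gamma\, X^* \widetilde B X$ for some $\gamma > 0$. Since a Hermitian form determines the map up to a unitary, $f$ is determined up to spherical equivalence by $\widetilde B$ up to this action; equivalently, since $X^* J X = J$ gives $J X^* \widetilde B X = X^{-1}(J\widetilde B) X$, by the $J$-unitary similarity class (up to a positive scalar) of the $J$-self-adjoint operator $L := J\widetilde B$. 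Evaluating the identity at $z = 0$ gives $\widetilde B_{00} = \abs{q(0)}^2 - \norm{p(0)}^2 > 0$.

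\emph{The positivity constraint and the main reduction.} Because $\norm{p}^2$ is a Hermitian sum of $N$ squares and $\abs{q}^2$ a single square, the matrix $M$ of the bidegree-$(2,2)$ form $(\norm{z}^2 - 1)\,S = \linnprod{J\zeta}{\zeta}\linnprod{\widetilde B\zeta}{\zeta}$ with respect to the Veronese monomials $\sZ = (1, z_1, \ldots, z_n, z_1^2, z_1z_2, \ldots, z_n^2)$ has at most one negative eigenvalue; it is not positive semidefinite (being negative on $\bB^n$), so with $N$ minimal it has exactly one. The heart of the proof is the claim: if $n \geq 2$ and $f$ has degree exactly two, then $\widetilde B$ is $U(n,1)$-congruent to $\operatorname{diag}(b_0, b_1, \ldots, b_n)$ with $b_0 > 0$, $0 \leq b_i \leq b_0$, and the $b_i$ not all zero. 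To prove this I would first conjugate by $\operatorname{diag}(1, U)$, $U \in U(n)$, to diagonalize the lower right $n\times n$ block of $\widetilde B$, and then apply the normal form for $J$-self-adjoint operators under $J$-unitary similarity developed earlier in the paper, checking that each non-diagonal possibility is incompatible with the above constraints: a non-real eigenvalue pair, a Jordan block of size $\geq 2$, an eigenvalue outside $[0, b_0]$, or a sign characteristic with more than one negative sign each either forces a second negative eigenvalue in $M$ or forces $p$ and $q$ to share a common factor (so that $f$ has degree $\leq 1$). This case analysis — relating the inertia of the $\binom{n+2}{2} \times \binom{n+2}{2}$ matrix $M$ to the canonical data of $L$, and seeing precisely where the cross-monomials $z_i z_j$ with $i \neq j$ (hence the hypothesis $n \geq 2$) do the work, the three-dimensional Veronese $(1, z_1, z_1^2)$ of the $n = 1$ case being too small — is the step I expect to be the main obstacle.

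\emph{Reading off the list and the invariant.} Granting the reduction, rescale $\gamma$ so that $b_0 = 1$ and set $t_i := 1 - b_i \in [0,1]$, not all equal to $1$. Then $S = 1 + \sum_i (1 - t_i)\abs{z_i}^2$, so $(\norm{z}^2 - 1)S = -1 + \sum_i t_i\abs{z_i}^2 + \sum_{i,j}(1 - t_i)\abs{z_i z_j}^2$; taking $q = 1$ we get $\norm{p}^2 = \sum_i t_i\abs{z_i}^2 + \sum_{i,j}(1 - t_i)\abs{z_i z_j}^2$, and combining for each pair $i \neq j$ the two terms in the monomial $z_i z_j$ into one of coefficient $\sqrt{(1 - t_i) + (1 - t_j)} = \sqrt{2 - t_i - t_j}$ by a target unitary, together with reordering the source coordinates so that $t_1 \leq \cdots \leq t_n$, yields exactly the map \eqref{alldeg2maps} (in particular every degree-two $f$ is spherically equivalent to a monomial map). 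Finally, the eigenvalues of $L = J\widetilde B$ are invariant under $J$-unitary similarity, hence under spherical equivalence up to a common positive factor; normalizing the unique negative eigenvalue to $-1$, the others are $1 - t_1, \ldots, 1 - t_n$, so the ordered tuple $(t_1, \ldots, t_n)$ is a complete spherical-equivalence invariant among the maps \eqref{alldeg2maps}, and distinct tuples give mutually inequivalent maps.
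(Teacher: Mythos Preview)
Your approach is the paper's: translate the degree-two sphere map into a Hermitian pair $(J,\widetilde B)$ (your $\widetilde B$ is the paper's $A$), put it into canonical form under simultaneous $*$-congruence (equivalently, $J$-unitary similarity of $L=J\widetilde B$), and argue that only the fully diagonal canonical form survives the one-negative-eigenvalue constraint on $M$.  Your reading-off of the list \eqref{alldeg2maps} and of the eigenvalue invariant at the end is correct and matches the paper's argument.

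The gap you yourself flag as ``the main obstacle'' is genuine, but you are making it harder than necessary by contemplating the case analysis in arbitrary source dimension.  The paper's key simplification (Lemma~\ref{lemma:mapsmonomial}) is this: in the Horn--Sergeichuk canonical form for the pair $(J,\widetilde B)$, every block in the $J$-component of size $\geq 2$ already carries at least one negative eigenvalue and no zero eigenvalue; since $J$ has exactly one negative eigenvalue, there is at most one such block, and the admissible ones are all $2\times 2$ or $3\times 3$.  Setting to zero the source variables corresponding to the $1\times 1$ blocks still produces a degree-two sphere map whose pair contains that same nondiagonal block, so it suffices to rule out nondiagonal blocks when $n=2$.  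At that point there are only three canonical pairs $(J,A)$ to examine (the $\Delta_2$ block, the $\Delta_3$ block, and the complex-eigenvalue $2\times 2$ block), and for each one the paper simply writes down the explicit $6\times 6$ matrix $B$ in the Veronese basis (equations \eqref{form1B}--\eqref{form3B}) and reads off from its block structure that it has at least two negative eigenvalues.  No inertia-tracking of a $\binom{n+2}{2}\times\binom{n+2}{2}$ matrix is required; once you see the reduction to $n=2$, the ``main obstacle'' collapses to three short computations.
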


It is worthwhile and generally more convenient
to see what the maps \eqref{alldeg2maps} look like in
more abstract language.  D'Angelo~\cite{DAngelo:CR} has shown that any
degree-two polynomial map taking the origin to the origin
can be abstractly written as
\begin{equation} \label{abstmaps}
Lz \oplus ( \sqrt{I-L^*L} z ) \otimes z,
\end{equation}
where $L$ is any linear map such that
$I-L^*L$ is positive semi-definite, and $z$ is the identity map in
nonhomogeneous coordinates.
A monomial map then corresponds to taking a diagonal $L$.  We can make $L$
have nonnegative entries, and by permuting the variables we can sort the
diagonal entries of $L$.  Hence our result could be stated as follows.
\emph{Every nonconstant degree-two rational CR map of spheres is spherically
equivalent to exactly one map of the form \eqref{abstmaps} where
$L$ is diagonal with nonnegative diagonal entries sorted by size, such
that $I-L^*L$ also has nonnegative entries.}
With the result
of D'Angelo it is obvious that \eqref{abstmaps} and
hence \eqref{alldeg2maps} is
exhaustive, that is, gives all monomial maps.  We will give another proof.

The main point is that all degree-two CR maps of spheres are spherically
equivalent to monomial maps.
Faran, Huang, Ji, and Zhang~\cite{FHJZ} have shown that for $n=2$, all
degree-two rational CR maps of spheres are equivalent to polynomial maps,
so we
improve on their result by allowing the source dimension to be arbitrary
and showing the maps are in fact monomial, not just polynomial.
An explicit example is also given in \cite{FHJZ} of a rational
degree-three CR map of $S^3$ to $S^7$ that is not spherically equivalent to
any polynomial map.  Therefore, the result is in some sense optimal.  See
also Proposition~\ref{exists3deg} for a slightly weaker statement.

The family of maps \eqref{alldeg2maps} in
the simplest case $n=2$ are the following
maps taking $S^3$ to $S^9$.
\begin{multline}
(z,w) \mapsto
\bigl(
\sqrt{s} \, z,
\sqrt{t} \, w,
\sqrt{1-s} \, z^2,
\sqrt{2-s-t} \, zw,
\sqrt{1-t} \, w^2 \bigr) , \\
0 \leq s \leq t \leq 1, \quad (s,t) \not= (1,1) .
\end{multline}
This family appeared already in the work of Wono~\cite{Wono},
who classified all monomial maps from $S^3$ to $S^9$.
Note that $s=0$ corresponds to maps of type (I) in 
\cite{JiZhang} and $t=1$ corresponds to type (IIA),
and the other cases correspond to
type (IIC)\@.
As there is only one two-dimensional family, it is not hard to see that
this two-dimensional
family corresponds to the maps (IIC) with
the maps of type (I) and (IIA) being on the ``boundary'' of the family.

The author would like to thank James Faran for explaining and
discussing the proof of his theorem, and
John D'Angelo for many discussions
on the topic and for suggestions that greatly improved this manuscript.
The author also thanks the referee for many suggestions and improvements.
The author would like to acknowledge
AIM for holding a workshop on the topic of complexity in CR
geometry in the
summer of 2006,
and MSRI for holding a workshop in 2005,
where the author was first exposed to the topic of proper maps of balls.
Finally, the author would like to acknowledge the Espresso Royale Cafe,
without whose coffee this work would not have been possible.


\section{Hermitian forms} \label{section:hermforms}

Let $\langle \cdot , \cdot \rangle$ denote the
standard pairing
$\langle z , w \rangle =
z_0 \bar{w}_0 + z_1 \bar{w}_1 + \cdots + z_n \bar{w}_n$.
Let $A$ be a Hermitian matrix, then a Hermitian form is simply
\begin{equation}
\langle A z , z \rangle ,
\end{equation}
where $z \in \C^{n+1}$.
We will often talk of the zero sets of Hermitian forms and so let us
define
\begin{equation}
\bV_A :=
\left\{ z \in \C^{n+1} \mid
\langle A z , z \rangle = 0 \right\} .
\end{equation}

In CR geometry we often think of a real polynomial
as a Veronese map composed with a Hermitian form.  Let $p$ be a
polynomial in $(z,w) \in \C^{n+1} \times \C^{n+1}$:
\begin{equation} \label{eqpdef}
p(z,w) = \sum_{\alpha\beta} a_{\alpha\beta} z^\alpha w^\beta .
\end{equation}
Suppose that
$p$ is bihomogeneous of bidegree $(d,d)$,
that is, $p(tz,w) = p(z,tw) = t^d p(z,w)$.
The polynomial is said to be \emph{Hermitian symmetric} if 
$p(z,\overline{w}) = \overline{p(w,\bar{z})}$.  In other words,
$a_{\alpha\beta} = \overline{a_{\beta\alpha}}$, that is,
the matrix with the entries $a_{\alpha\beta}$ is Hermitian.
It is not hard to see that
$p$ is Hermitian symmetric
if and only if $p(z,\bar{z})$ is real valued.

Let $\sZ = \sZ_d$ be the degree $d$ Veronese map
\begin{equation}
(z_0,\ldots,z_n) \overset{\sZ}{\mapsto}
(z_0^d,z_0^{d-1}z_1,\ldots,z_n^d),
\end{equation}
that is, the map
whose components are all the degree $d$ monomials.
We can think of $p$ as
\begin{equation}
p(z,\bar{z}) = \langle A \sZ, \sZ \rangle ,
\end{equation}
where $A = [ a_{\alpha\beta} ]_{\alpha\beta}$ is the
matrix of coefficients from \eqref{eqpdef}.
By \emph{signature}
of $p$ we will mean the signature of $A$.
Writing $A$ as a sum of rank one matrices we will see below
that $p$ is the composition
of a diagonal Hermitian form with the same signature as $A$ and a homogeneous
holomorphic map of $\C^{n+1}$ to some $\C^{N+1}$.  That is, we obtain a map
taking the zero set of $p$ to a hyperquadric.

Let $f$ be
a rational map of $\bP^{n}$ to $\bP^{N}$.  In
homogeneous coordinates, $f$ is given by
$N+1$ homogeneous polynomials $f = f_0,f_1,\ldots,f_N$.
We wish to formulate what it means for $f$ to take
$\bV_J \subset \bP^{n}$ to $\bV_V \subset \bP^{N}$ for 
two Hermitian matrices $J$ and $V$.
We can simply plug $f$ into the equation for $\bV_V$ to obtain
\begin{equation}
\langle V f(z) , f(z) \rangle = 0 .
\end{equation}
Then $f$ takes
$\bV_J$ to $\bV_V$ if and only if there exists a bihomogeneous real
polynomial $q$ such that
\begin{equation}
\langle V f(z) , f(z) \rangle = q(z,\bar{z}) \langle J z , z \rangle .
\end{equation}
To classify $f$ that take $\bV_J$ to $\bV_V$
we first need to identify those $f$ that differ by an automorphism
preserving $\bV_V$.

It will be useful to represent $\langle V f(z) , f(z) \rangle$
slightly differently.  Suppose that $f$ is of degree $d$.
Let $\sZ$ be the degree $d$ Veronese map.
Then
we can write $f(z) = F \sZ$ for some complex matrix $F$.  We write
\begin{equation}
\langle V f(z) , f(z) \rangle
=
\langle V F \sZ , F \sZ \rangle
=
\langle F^* V F \sZ , \sZ \rangle .
\end{equation}

\begin{lemma} \label{formprop}
Let $V$ be a nonsingular Hermitian matrix.  Let $f$ and $g$ be homogeneous
polynomial maps of $\C^{n+1}$ to $\C^{N+1}$ such that
\begin{equation}
\langle V f(z) , f(z) \rangle = \langle V g(z) , g(z) \rangle .
\end{equation}
Suppose that $f$ has linearly independent components.
Then
\begin{equation}
g(z) = C f(z)
\end{equation}
for some invertible matrix $C$ such that $C^* V C = V$.
\end{lemma}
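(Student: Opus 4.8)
The plan is to turn the polynomial identity into an identity of coefficient matrices and then finish by pure linear algebra, squeezing the rigidity out of the nonsingularity of $V$ together with the independence hypothesis on $f$, which are each used a couple of times.

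First I would record two preliminary observations. Since the components of $f$ are linearly independent, $f \not\equiv 0$; and in fact $\langle V f(z), f(z)\rangle \not\equiv 0$, because otherwise polarizing (replacing $\bar z$ by an independent $\bar w$) and fixing a generic $w$ in $\langle V f(z), f(w)\rangle \equiv 0$ would give a nontrivial linear relation among the components of $f$ — here one uses that $V$ is nonsingular, so $V f(w) \neq 0$ for generic $w$. Consequently $\langle V g(z), g(z)\rangle \not\equiv 0$ as well, and since both sides of the hypothesis are bihomogeneous of bidegrees $(\deg f,\deg f)$ and $(\deg g,\deg g)$ respectively, they force $\deg f = \deg g =: d$. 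Now write $f = F \sZ$ and $g = G \sZ$ for the common degree-$d$ Veronese map $\sZ = \sZ_d$, with $F,G$ of size $(N+1)\times M$, $M = \binom{n+d}{d}$. Using the identification $\langle V F \sZ, F \sZ\rangle = \langle F^*VF\sZ,\sZ\rangle$ recorded above and the linear independence of the monomials $z^\alpha\bar z^\beta$, the hypothesis becomes the matrix identity
\[
F^* V F = G^* V G .
\]
Moreover ``$f$ has linearly independent components'' says precisely that $F$ has full row rank $N+1$.

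Next comes the linear-algebra core. Because $F$ has full row rank, $F^*\colon \C^{N+1}\to\C^M$ is injective, so $F^*VFv = 0$ forces $VFv = 0$, and then (as $V$ is nonsingular) $Fv = 0$; hence $\ker(F^*VF) = \ker F$ and, by rank–nullity, $\rank(F^*VF) = N+1$. From $F^*VF = G^*VG$ we get $\rank(G^*VG) = N+1$, and since $\rank(G^*VG) \le \rank G \le N+1$, the matrix $G$ also has full row rank; the same argument then gives $\ker(G^*VG) = \ker G$. Combining, $\ker F = \ker G$. Two $(N+1)\times M$ matrices of full row rank with the same kernel are related by an invertible left factor: factoring each through $\C^M/\ker F \cong \C^{N+1}$ produces a (unique) invertible $(N+1)\times(N+1)$ matrix $C$ with $G = CF$, whence $g = G\sZ = CF\sZ = Cf$, which is the first assertion.

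Finally, substituting $G = CF$ into $F^*VF = G^*VG$ gives $F^*(V - C^*VC)F = 0$; since $F$ is surjective onto $\C^{N+1}$, this yields $u^*(V - C^*VC)u = 0$ for every $u\in\C^{N+1}$, and a Hermitian matrix whose quadratic form vanishes identically is $0$ (polarize), so $C^*VC = V$. The point to watch throughout is that $V$ may be indefinite, so one can never argue ``$u^*Vu = 0 \Rightarrow u = 0$''; all the needed rigidity has to come from the nonsingularity of $V$ plus the full-rank hypothesis, notably in the steps $\ker(F^*VF) = \ker F$ and $\rank(G^*VG) = \rank G$. I expect the only genuine verifications are the polarization step (that linear independence of the $z^\alpha\bar z^\beta$ really makes the polynomial identity equivalent to $F^*VF = G^*VG$) and the bookkeeping with kernels and ranks; the rest is formal.
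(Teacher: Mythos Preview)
Your proof is correct. Both you and the paper begin identically: write $f=F\sZ$, $g=G\sZ$ and use the linear independence of the monomials $z^\alpha\bar z^\beta$ to turn the hypothesis into the matrix identity $F^*VF=G^*VG$. From there the arguments diverge. The paper works concretely: it permutes the columns so that $F=[\,F_1~F_2\,]$ with $F_1$ square and invertible, sets $C=G_1F_1^{-1}$, reads off $C^*VC=V$ from the $F_1^*VF_1=G_1^*VG_1$ block, and then checks $G_2=CF_2$ from the off-diagonal block. Your argument is more invariant: you show $\ker(F^*VF)=\ker F$ (using injectivity of $F^*$ and nonsingularity of $V$), deduce that $G$ also has full row rank and $\ker G=\ker F$, and then invoke the general fact that two surjections with equal kernels differ by an invertible left factor. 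Your route is a bit cleaner and coordinate-free, and it also makes explicit the equal-degree step that the paper leaves implicit; the paper's route has the mild advantage of producing the matrix $C$ by an explicit formula. In your last step you could even skip the polarization: from $F^*(V-C^*VC)F=0$ with $F^*$ injective you get $(V-C^*VC)F=0$, and then surjectivity of $F$ gives $V-C^*VC=0$ directly.
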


\begin{proof}
As explained above, write
$f(z) = F \sZ$ for some complex matrix $F$ with linearly independent rows.
Similarly, $g(z) = G \sZ$.
Thus
$\langle V f(z) , f(z) \rangle = \langle V g(z) , g(z) \rangle$ implies
that 
\begin{equation}
\langle ( F^* V F - G^* V G) \sZ, \sZ \rangle = 0.
\end{equation}
We have a real polynomial that is identically zero and so its coefficients
are zero.  Thus $F^* V F = G^* V G$.  By
permuting the monomials, we could suppose that
$F = [ \, F_1 ~ F_2 \, ]$, where $F_1$ is an invertible matrix.  We write
$G = [ \, G_1 ~ G_2 \, ]$, where $G_1$ is square.
Now $F^* V F = G^* V G$ implies
$F_1^* V F_1 = G_1^* V G_1$.  Let $C = G_1
F_1^{-1}$.  Clearly $C^* V C = V$ and $G_1 = C F_1$.  So if $G_2 = C F_2$,
then we are finished.
$F^* V F = G^* V G$ also implies
$F_1^* V F_2 = G_1^* V G_2$.  Replace $G_1$ with $C F_1$ and obtain
$F_1^* V F_2 = F_1^* V C^{-1} G_2$.  As $F_1$ and $V$ are invertible
we get $G_2 = C F_2$ and thus $g(z) = C f(z)$.
\end{proof}

Lemma~\ref{formprop} says that if 
$\langle V f(z) , f(z) \rangle = \langle V g(z) , g(z) \rangle$, then
$f$ and $g$ are equal up to a linear map of the target space preserving
the form defined by $V$.  Note that these linear maps (up to a scalar multiple)
correspond exactly to
linear fractional
transformations that take $\bV_V$ to
itself and preserve the sides of $\bV_V$.
Since we are working with homogeneous coordinates, we must also always
consider the possibility
$\langle V f(z) , f(z) \rangle = \lambda \langle V g(z) , g(z) \rangle$
for $\lambda > 0$.  We can then rescale $f$ or $g$ and use the proposition.
Furthermore,
if the number of positive and negative eigenvalues of $V$ are equal, then
there exists a linear map that takes the form corresponding
to $V$ to the form corresponding to $-V$.  In this case, we can have
$\langle V f(z) , f(z) \rangle = - \langle V g(z) , g(z) \rangle$
and still $f$ and $g$ differ by an automorphism of the target
and take the same set to $\bV_V$, but one swaps sides
and one does not.

For the source and target we
are mostly interested in hyperquadrics and spheres.  When $V$ is Hermitian,
the set $\bV_V$
is equivalent to a hyperquadric by an
automorphism of $\bP^N$.
For the hyperquadric
$Q(a,b)$
we let $V$ be the matrix with $a$ ones and $b+1$ negative ones
on the diagonal.
We have the
following corollary.

\begin{cor}
Let $f$ and $g$ be rational CR maps of $Q(a,b)$ to $Q(c,d)$, and
let $V$ be a Hermitian form defining $Q(c,d)$.
Let $\hat{f}$ and $\hat{g}$ be the corresponding homogeneous polynomial
maps.
Suppose that the components of $\hat{f}$ are linearly independent.
Then $f$ and
$g$ are Q-equivalent if and only if there exists a $\chi \in U(a,b)$
and
$\lambda > 0$ if $c \not= d+1$
or $\lambda \in \R \setminus \{0\}$ if $c=d+1$, such that
\begin{equation} \label{cor2eq}
\langle V \hat{f}(\chi z) , \hat{f}(\chi z) \rangle = \lambda \langle V
\hat{g}(z) , \hat{g}(z) \rangle \quad \text{ for all $z$}.
\end{equation}
\end{cor}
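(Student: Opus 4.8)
The plan is to read the corollary off from Lemma~\ref{formprop} and the paragraph interpreting it, after translating Q-equivalence into an identity of Hermitian forms in homogeneous coordinates. Here $\hat f, \hat g$ are taken in lowest terms, and I write $X$ for an invertible matrix representing $\chi \in U(a,b)$. Two preliminary facts are needed. First, a linear fractional map of $\bP^N$ belongs to $U(c,d)$ precisely when its matrix $T$ satisfies $T^*VT = \mu V$ for some real $\mu \neq 0$, and one may take $\mu > 0$ unless $V$ is neutral (equally many positive and negative eigenvalues), which for the normalization of $V$ with $c$ positive and $d+1$ negative diagonal entries occurs exactly when $c = d+1$; in that case there is also an invertible $S$ with $S^*VS = -V$, as noted just after Lemma~\ref{formprop}. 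Second, $\hat f\circ\chi$ is a homogeneous polynomial map of the same degree as $\hat f$ with linearly independent components --- linear independence of functions being unaffected by the invertible substitution $z\mapsto Xz$ --- and, since composing a rational map with an automorphism creates no common polynomial factors, two lowest-terms homogeneous representatives of one rational map of $\bP^N$ agree up to a nonzero scalar.

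($\Rightarrow$) Suppose $f\circ\chi = \tau\circ g$ with $\chi\in U(a,b)$, $\tau\in U(c,d)$; let $T$ represent $\tau$, so $T^*VT = \mu V$ with $\mu$ real, nonzero, and positive if $c\neq d+1$. Lifting $f\circ\chi = \tau\circ g$ to homogeneous coordinates gives $\hat f(Xz) = \gamma\,T\,\hat g(z)$ for a nonzero constant $\gamma$, whence
\begin{equation*}
\langle V\hat f(\chi z),\hat f(\chi z)\rangle
= \abs{\gamma}^2\langle T^*VT\,\hat g(z),\hat g(z)\rangle
= \abs{\gamma}^2\mu\,\langle V\hat g(z),\hat g(z)\rangle ,
\end{equation*}
so \eqref{cor2eq} holds with $\lambda = \abs{\gamma}^2\mu$, of the asserted sign.

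($\Leftarrow$) Assume \eqref{cor2eq}. If $\lambda>0$ set $\tilde g = \sqrt{\lambda}\,\hat g$; if $\lambda<0$ (so $c=d+1$) set $\tilde g = \sqrt{\abs{\lambda}}\,S\hat g$ with $S^*VS=-V$. A direct computation turns \eqref{cor2eq} into $\langle V\hat f(\chi z),\hat f(\chi z)\rangle = \langle V\tilde g(z),\tilde g(z)\rangle$. Since $\hat f\circ\chi$ has linearly independent components, Lemma~\ref{formprop} yields an invertible $C$ with $C^*VC=V$ and $\tilde g(z) = C\,\hat f(Xz)$; equivalently $\hat f(Xz) = T\hat g(z)$ with $T := \sqrt{\abs{\lambda}}\,C^{-1}$ (resp. $T := \sqrt{\abs{\lambda}}\,C^{-1}S$), and one checks $T^*VT = \lambda V$. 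Thus $T$ represents some $\tau\in U(c,d)$, and reading $\hat f(Xz) = T\hat g(z)$ in nonhomogeneous coordinates gives $f\circ\chi = \tau\circ g$, so $f$ and $g$ are Q-equivalent.

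All the substance is already in Lemma~\ref{formprop}; the only steps requiring care --- and the ones I expect to be fussiest --- are the exact dictionary between $U(c,d)$ and matrices $T$ with $T^*VT=\mu V$ (in particular that a representative with $\mu<0$, equivalently the side-swap $S$, exists precisely when $c=d+1$, which is what dictates the dichotomy in the admissible range of $\lambda$) and the verification that passing between a rational map and its homogeneous representative introduces nothing worse than a nonzero scalar, for which lowest-terms normalization and the linear independence of the components of $\hat f$ are used.
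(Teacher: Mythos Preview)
Your proposal is correct and follows essentially the same approach as the paper: the corollary is stated without a separate proof, being read off directly from Lemma~\ref{formprop} together with the paragraph preceding the corollary, which explains the rescaling by $\lambda>0$ and the side-swapping case when $V$ has equally many positive and negative eigenvalues. Your write-up is a careful expansion of exactly that discussion, with the same key inputs (Lemma~\ref{formprop}, the characterization of $U(c,d)$ via $T^*VT=\mu V$, and the scalar ambiguity in homogeneous representatives).
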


Consequently, by working in projective space and with 
Hermitian forms rather than the maps themselves, we reduce the Q-equivalence
problem to an equivalence problem using only the group of automorphisms of
the source.

Now 
suppose we start with any real polynomial 
$\langle B \sZ_d , \sZ_d \rangle$ 
for some Hermitian matrix $B$.
We write $B$ as a sum of rank one matrices.  As
$B$ is Hermitian, this can be done in the following way.  We take the positive
eigenvalues $\lambda_1,\ldots,\lambda_k$ and the negative eigenvalues
$-\lambda_{k+1},\ldots,-\lambda_r$.  We ignore zero eigenvalues if
present.  We let $v_1,\ldots,v_r$ be an orthonormal set of corresponding
eigenvectors.
\begin{equation}
B = \sum_{j=1}^k \lambda_j v_j v_j^* - \sum_{j=k+1}^r \lambda_j v_j v_j^* .
\end{equation}
Now we define $f_j(z) = \sqrt{\lambda_j} \, v_j^* \sZ$.  We see that
\begin{equation}
\langle V f(z), f(z) \rangle = 
\langle B \sZ, \sZ \rangle ,
\end{equation}
where $V$ is the form with $k$ ones and $r-k$ negative ones on the diagonal.

Hence any real polynomial corresponds to a holomorphic map $f \colon \bP^n \to \bP^N$
taking the zero set of the polynomial to a hyperquadric with the same
signature as the coefficient matrix $B$.
Therefore to classify all maps taking one hyperquadric to another
hyperquadric up to Q-equivalence, we need only classify real polynomials that
vanish on the source hyperquadric and have the correct signature.

Note that we will generally
scale all the $v_j$ by the same number to make the expressions easier to work
with.  Other methods can be employed to construct
Q-equivalent $f$ by writing $B$ differently as a sum
of rank one matrices, but the procedure outlined above is what we will use.
Any other
method will produce the same number of positive and the same number of
negative terms, assuming the components are linearly independent.

If we have a Hermitian matrix $B$ such that $\langle B \sZ_d , \sZ_d \rangle$ 
is zero on $\bV_J$, then
\begin{equation}
\langle B \sZ_d , \sZ_d \rangle =
\langle A \sZ_{d-1} , \sZ_{d-1} \rangle
\langle J z , z \rangle ,
\end{equation}
for some Hermitian matrix $A$.  We take an automorphism preserving the set
defined by $J$, that is, a matrix $X$ such that $X^* J X = J$.  To find all maps
equivalent by an automorphism of the source we compute the canonical form of
\begin{equation}
\langle A \sZ_{d-1} (Xz) , \sZ_{d-1} (Xz) \rangle
\langle J z , z \rangle 
=
\langle \hat{X}^* A \hat{X} \sZ_{d-1} , \sZ_{d-1} \rangle
\langle J z , z \rangle ,
\end{equation}
where $\hat{X}$ is the matrix defined by $\sZ_{d-1}(Xz) = \hat{X} \sZ_{d-1}(z)$.
Hence, we will look for a canonical form of the pair $(J,A)$ under $*$-conjugation
by $\hat{X}$ and $X$.  If $J$ has the same number of positive and negative
eigenvalues, we would have to also consider linear maps such that
$X^* J X = -J$.
When $d=2$, then $\hat{X} = X$ and matters become simpler
and we describe the method in more detail.

In general, we could start with any algebraic source manifold,
for example defined
by the zero set of the real polynomial
$\langle C \sZ_k, \sZ_k \rangle$.  Then the existence of a map $f$ taking
the manifold to a hyperquadric means that there exists a Hermitian matrix $A$
such that
\begin{equation}
\langle B \sZ_d, \sZ_d \rangle
=
\langle A \sZ_{d-k}, \sZ_{d-k} \rangle
\langle C \sZ_k, \sZ_k \rangle ,
\end{equation}
where $B$ is the form resulting from composing the map $f$ with the defining
equation of the target hyperquadric.
Classifying such maps is then equivalent to finding
normal forms for the pair $(C,A)$ under the automorphism group
fixing $C$.
In this paper we study the case when the
source manifold is a hyperquadric.


\section{Degree 2 maps} \label{section:deg2maps}

Let us consider degree-two maps.  Let $f$ be a degree-two map taking
a hyperquadric to a hyperquadric.  Then for the proper $V$ and $J$
we have
$\langle V f(z) , f(z) \rangle = q(z,\bar{z}) \langle J z , z \rangle$.
The polynomial $q$ is real valued of bidegree $(1,1)$ and can, therefore,
be also written
as a Hermitian form.  We write $f(z) = F \sZ$ as before.  We obtain
\begin{equation}
\langle F^* V F \sZ , \sZ \rangle =
\langle A z , z \rangle \langle J z , z \rangle
\end{equation}
for some Hermitian matrix $A$.

If on the other hand we start with
$\langle A z , z \rangle \langle J z , z \rangle$ and we multiply, we will
get some Hermitian matrix $B$ such that
$\langle B \sZ , \sZ \rangle = \langle A z , z \rangle \langle J z , z
\rangle$.  If $B$ and $V$ have the same signature, we can find a matrix $F$
such that $B = F^* V F$ by linear algebra as explained in \S~\ref{section:hermforms}.

Therefore, by going through all possible Hermitian
matrices $A$ we get all degree-two maps of $\bV_J$ to all hyperquadrics.
We want to restrict only 
to those matrices $A$ where the resulting matrix $B$ has the right signature.

We assume that the components of $f$ are linearly
independent.  This limitation turns out not to be a problem for CR maps of
spheres, but it is a problem for CR maps of hyperquadrics where we miss maps
such as $(1,g,g)$, but that is essentially all we miss.

If $Q(a,b)$ is the source hyperquadric (corresponding to $J$)
and we take an automorphism $\chi$
of $Q(a,b)$, then $\chi$ is represented by a matrix $X$ such that
$X^* J X = J$.  Therefore, 
\begin{equation}
\langle A Xz , Xz \rangle \langle J Xz , Xz \rangle
=
\langle X^* A X z , z \rangle \langle J z , z \rangle .
\end{equation}
Therefore, we first find all the canonical forms for a pair Hermitian
matrices under the equivalence $(A,B) \sim (C,D)$ whenever there
exist a nonsingular matrix $X$ such that $X^*AX = C$ and $X^*BX = D$
(simutaneous $*$-congruence).
Next we collect those canonical pairs where one of the matrices is
$*$-congruent to $J$.  Such pairs therefore give canonical forms of
degree-2 CR maps of hyperquadrics from the $J$ hyperquadric
up to Q-equivalence.
In the special case when the number of positive and negative
eigenvalues of $J$ is the same (i.e.\ $a=b+1$), then we also need to
consider $X$ such that $X^* J X = -J$.  However, we will usually assume
that $J$ defines the sphere and $a+b\geq 2$.

The problem of the classification of a pair of matrices
has a long history going back to Kronecker.  The first
results for Hermitian matrices and $*$-congruence
were proved in the 1930s by several authors
independently.  See the survey by
Lancaster and Rodman~\cite{LancasterRodman}.
We could use these older results.
For convenience, we will
use a recent paper by
Horn and Sergeichuk~\cite{HornSergeichuk},
whose canonical form is easy to work with.
They also give an algorithm for computing the canonical form, so we also
have an algorithm for producing a normal form for maps of hyperquadrics.
That is, to decide if two degree-2
CR maps of hyperquadrics are Q-equivalent, we first find
the corresponding Hermitian forms.  Then we
follow the procedure by Horn and Sergeichuk to produce a normal form of
the matrices and then simply check and see if the normal forms are the same
(up to a multiple of course).
This method works only if the components of the maps are
linearly independent.  However, it is not hard to reduce to this case in
general.

Before we can give the result of Horn and Sergeichuk, we first need to define
the building block matrices used for their canonical form.
Let us define $M_n$ as the $n \times n$ matrix with ones on the subdiagonal
and the superdiagonal, and similarly
$N_n$ as the $n \times n$ matrix with negative ones on the subdiagonal
and ones on the super diagonal.
\begin{equation}
M_n :=
\begin{bmatrix}
0 & 1 & & 0 \\
1 & 0 & \ddots &  \\
& \ddots & \ddots & 1 \\
0 & & 1 & 0 \\
\end{bmatrix}
, \quad
N_n :=
\begin{bmatrix}
0 & 1 & & 0 \\
-1 & 0 & \ddots &  \\
& \ddots & \ddots & 1 \\
0 & & -1 & 0 \\
\end{bmatrix} .
\end{equation}
Let $J_n(\lambda)$ be the $n \times n$ Jordan block with eigenvalue $\lambda$
($\lambda$ on the diagonal, ones on the superdiagonal).  Finally define
\begin{equation}
\Delta_n(\alpha,\beta) := 
\begin{bmatrix}
0 & & & & \alpha \\
& & & \alpha & \beta \\
& & \iddots & \iddots & \\
& \alpha & \beta & & \\
\alpha & \beta & & & 0 \\
\end{bmatrix} .
\end{equation}
We can now give the classification theorem, or at least the part of the
theorem useful for us.  $I_n$ is the $n \times n$ identity matrix.

\begin{thm}[Horn-Sergeichuk~\cite{HornSergeichuk}]
Let $A$, $B$ be a pair of Hermitian matrices.  They are simultaneously
$*$-congruent to a direct sum of blocks of the following four types,
determined uniquely up to permutation.
\begin{enumerate}[(i)]
\item $\bigl( M_n, iN_n \bigr)$,
\item $\pm \bigl( \Delta_n(1,0),\Delta_n(\alpha,1) \bigr)$, $\alpha \in \R$,
\item $\pm \bigl( \Delta_n(0,1),\Delta_n(1,0) \bigr)$,
\item $\left(
\begin{bmatrix}
0 & I_n \\
I_n & 0
\end{bmatrix} ,
\begin{bmatrix}
0 & J_n(\alpha+i\beta)^* \\
J_n(\alpha+i\beta) & 0
\end{bmatrix}
\right)$, $\alpha,\beta \in \R$, $\alpha+\beta i \not=i$, $\beta > 0$.
\end{enumerate}
\end{thm}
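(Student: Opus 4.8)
\emph{Proof sketch (proposal).} The plan is to reduce the classification of pairs of Hermitian forms to the $*$-congruence classification of a \emph{single} complex matrix. The key observation is that the $\R$-linear map $(A,B)\mapsto C:=A+iB$ is a bijection from pairs of Hermitian matrices onto all square complex matrices, with inverse $C\mapsto\bigl(\tfrac12(C+C^*),\tfrac{1}{2i}(C-C^*)\bigr)$, and it intertwines the two group actions because $X^*AX+iX^*BX=X^*CX$. Hence a complete, non-redundant list of canonical pairs $(A,B)$ under simultaneous $*$-congruence is the same datum as a complete list of canonical matrices $C$ under $*$-congruence, $C\mapsto X^*CX$; the four block types in the statement are exactly the Hermitian/skew decompositions $C=A+iB$ of the canonical $*$-congruence blocks of $C$. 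So the work is to prove the canonical form for $*$-congruence of one matrix.

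\emph{Step 1 — the singular part.} First I would show every $C$ is $*$-congruent to $C_0\oplus C_1$ with $C_1$ invertible and $C_0$ a direct sum of nilpotent Jordan blocks $J_k(0)$. A clean route is Horn and Sergeichuk's regularization: using the kernels of $C$ and $C^*$, peel off a $J_1(0)=[0]$ summand or shrink the radical by a $*$-congruence move, iterating while tracking the sizes. Translating back through $C\mapsto(A,B)$, the block $J_k(0)$ has Hermitian part with $1$'s on the sub- and superdiagonal and skew part proportional to $N_k$, so up to the harmless overall rescaling allowed in the statement it contributes the pair $\bigl(M_k,iN_k\bigr)$ of type (i).

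\emph{Step 2 — the invertible part via the $*$-cosquare.} For invertible $C$, the $*$-cosquare $\Phi(C):=C^{-*}C$ satisfies $\Phi(X^*CX)=X^{-1}\Phi(C)X$, so its similarity class is a $*$-congruence invariant; the crux is that over $\C$ an invertible $C$ is determined up to $*$-congruence by the similarity class of $\Phi(C)$ \emph{together with} a sign characteristic on the eigenvalues of $\Phi(C)$ on the unit circle. The spectrum of $\Phi(C)$ is stable under $\nu\mapsto 1/\bar\nu$; the Cayley-type substitution $\nu=\tfrac{\lambda-i}{\lambda+i}$ relating $\Phi(C)$ to the Hermitian pencil $A-\lambda B$ sends real $\lambda$ to the unit circle, $\lambda=\infty$ to $\nu=1$, and $\lambda=\pm i$ to $\nu=0,\infty$. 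Splitting the generalized eigenspaces of $\Phi(C)$ accordingly: a pair $\{\nu,1/\bar\nu\}$ with $|\nu|\neq1$ (i.e.\ non-real $\lambda\neq\pm i$) gives, after fixing a fundamental domain for the pairing, a block of type (iv) — the constraints $\beta>0$, $\alpha+\beta i\neq i$ recording the domain choice and that $\lambda=\pm i$ is absorbed into the singular part of Step~1; a unit-modulus $\nu\neq1$ (finite real $\lambda$) gives $\pm\bigl(\Delta_n(1,0),\Delta_n(\alpha,1)\bigr)$ of type (ii), with $\alpha=\lambda$ running over the circle minus a point; and $\nu=1$ ($\lambda=\infty$) gives $\pm\bigl(\Delta_n(0,1),\Delta_n(1,0)\bigr)$ of type (iii). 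The $\pm$ in (ii) and (iii) is precisely the sign characteristic — each Jordan block of $\Phi(C)$ at a unit-modulus eigenvalue detaches with one well-defined sign — while types (i) and (iv) carry no sign because their forms are split. Uniqueness up to permutation then follows by extracting intrinsic invariants: the multiset of sizes of nilpotent blocks of $C$, the Jordan data of $\Phi(C)$, and, on the unit circle, the sign characteristic (readable, e.g., from the inertia of $\tfrac12(\zeta C+\bar\zeta C^*)$ for $\zeta$ near the relevant point of the circle).

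\emph{Main obstacle.} Everything outside Step~2 is classical bookkeeping: Jordan theory for the off-circle spectrum and a standard regularization for the singular/nilpotent part. The real difficulty is the sign characteristic — showing that the similarity class of the $*$-cosquare fails to separate $*$-congruence classes exactly along the unit circle, that each Jordan block there splits off carrying a well-defined sign, and that these signs form the complete set of remaining invariants. In pencil language this is the familiar statement that the real (and infinite) eigenvalues of a Hermitian pencil carry sign characteristics while the non-real ones do not, and making that precise at the level of Jordan blocks (not merely semisimple eigenvalues) is where the substantive work lies.
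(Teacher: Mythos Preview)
The paper does not give a proof of this theorem; it is quoted verbatim as a result of Horn and Sergeichuk and then applied as a black box (``Let us use this classification theorem''). So there is no in-paper argument to compare your sketch against.

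That said, your outline is essentially the strategy of the cited Horn--Sergeichuk paper itself: pass from the Hermitian pair $(A,B)$ to the single matrix $C=A+iB$, observe that simultaneous $*$-congruence of the pair is exactly $*$-congruence of $C$, regularize to split off the singular (nilpotent) summands, and on the invertible part use the $*$-cosquare $C^{-*}C$ as a similarity invariant, with the sign characteristic on the unit-circle eigenvalues supplying the residual $\pm$ data. Your identification of the four block types with the four spectral regimes of the cosquare (off-circle pairs $\leftrightarrow$ type (iv), unit circle minus a point $\leftrightarrow$ type (ii), the exceptional point $\leftrightarrow$ type (iii), nilpotent $\leftrightarrow$ type (i)) is correct, and you have correctly located the only genuinely delicate step in the sign characteristic. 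One small imprecision: the phrase ``harmless overall rescaling allowed in the statement'' is not quite right --- the statement allows no rescaling --- but the nilpotent block $J_k(0)$ is $*$-congruent (not merely a scalar multiple) to the matrix $M_k+i(iN_k)$ via a diagonal $X$ and a reversal permutation, so the match with type (i) is exact once you supply that congruence.
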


When we write $\pm \bigl( \Delta_n(1,0),\Delta_n(\alpha,1) \bigr)$ we mean
the pair $\bigl( \Delta_n(1,0),\Delta_n(\alpha,1) \bigr)$ or the pair
$\bigl(\Delta_n(-1,0), \Delta_n(-\alpha,-1) \bigr)$.  Similarly for $\pm
\bigl( \Delta_n(0,1),\Delta_n(1,0) \bigr)$.

Let us use this classification theorem.  Let us study the case
when the source hyperquadric is a sphere.
Take a degree-two rational CR map $f \colon S^{2n-1} \to Q(c,d)$,
and obtain the pair of Hermitian matrices
$(J,A)$.
We will put the pair $(J,A)$ into canonical
form.
If we can diagonalize $J$ and $A$ simultaneously by $*$-congruence, then
the map is equivalent to a monomial map.  That is, the map in homogeneous
coordinates is monomial.  If the map is monomial (not allowing for negative
exponents) in nonhomogeneous coordinates remains to be seen.
Suppose that for $k=0,\ldots,N$ the $k$th component of the homogenized
map $\hat{f}$
is $c_k z^{\alpha_k}$ for some degree-two multi-index $\alpha_k$.
Now write
$\langle V \hat{f}(z), \hat{f}(z) \rangle = \langle A z, z \rangle \langle J z, z \rangle$.
As $A$ is not identically zero, there exists a $j = 0,\ldots,n$
such that when we set $z_j = 1$ and
$z_m = 0$ for all $m \not= j$ we obtain
$\langle A z, z \rangle \not= 0$.  As $J$ is invertible
we get
$\langle J z, z \rangle \not= 0$ and thus
$\langle V \hat{f}(z), \hat{f}(z) \rangle \not= 0$.  As
\begin{equation}
\langle V \hat{f}(z), \hat{f}(z) \rangle =
\sum_{k=0}^N \pm \abs{c_k}^2 \abs{z^{\alpha_k}}^2 ,
\end{equation}
we see that for at least one $k$ we have
$z^{\alpha_k} = z_j^2$, that is, $\hat{f}$ contains a pure monomial
term.  Therefore in the nonhomogeneous coordinates obtained by
$z_j = 1$, we get a monomial (no negative exponents) CR map from $Q(a,b)$
to $Q(c,d)$, where $Q(a,b)$ is equivalent to $S^{2n-1}$.
We will deal with the classification of monomial maps for each problem
separately in later sections.

Therefore,
let us see what
happens in the case when we cannot simultaneously diagonalize $J$ and $A$.
From now on, assume that at least
one block in the canonical form is larger than $1 \times 1$.

As $J$ defines the sphere,
we note that all blocks
except one in the canonical form for $J$ must be $1 \times 1$ blocks.
Indeed, all the possible blocks of larger size have at least 1 negative
eigenvalue, and hence there can only be one of these.  Furthermore, $J$
has no zero eigenvalues.
Such blocks that have at most 1 negative eigenvalue and no zero eigenvalues
are
$M_2$, $\pm \Delta_2(1,0)$, $\Delta_3(1,0)$,
$\left[ \begin{smallmatrix} 0 & I_1 \\ I_1 & 0 \end{smallmatrix} \right]$.
All these blocks are $2 \times 2$ or $3 \times 3$.
It will be sufficient to consider only source dimension $n=2$ in all that
follows.  So let us suppose that $J$ and $A$ are both
$3 \times 3$, and let us compute all the normal forms for the
Hermitian forms that arise from degree-two maps.

Note that 
$\left[ \begin{smallmatrix} 0 & I_1 \\ I_1 & 0 \end{smallmatrix} \right] =
M_2$, and $iN_2$ can be obtained by swapping variables and letting
$\alpha+i\beta = i$
in
the block 
$\left[ \begin{smallmatrix} 0 & \alpha-i\beta \\ \alpha+i\beta & 0 \end{smallmatrix} \right]$.
So we need not consider the block $M_2$, but just need to allow
$\alpha+i\beta = i$.

Hence let us compute.  First let us take the situation corresponding to
the block $\Delta_2$ in $J$.  That is we have the following canonical form 
for $(J,A)$
\begin{equation}
\left(
\begin{bmatrix}
       1 &        0 &        0\\
       0 &        0 &        1\\
       0 &        1 &        0
\end{bmatrix}
,
\begin{bmatrix}
  \alpha &        0 &        0\\
       0 &        0 &    \beta\\
       0 &    \beta &        1
\end{bmatrix}
\right) .
\end{equation}
For $\alpha, \beta \in \R$.
Take
$\sZ = (z_0^2,z_0z_1,z_0z_2,z_1^2,z_1z_2,z_2^2)$.
The form corresponding to the product
$\langle A z, z \rangle \langle J z, z \rangle  = \langle B \sZ , \sZ
\rangle$ is
\begin{equation} \label{form1B}
B=
\begin{bmatrix}
  \alpha &        0 &        0 &        0 &        0 &        0\\
       0 &        0 & \alpha+\beta &        0 &        0 &        0\\
       0 & \alpha+\beta &        1 &        0 &        0 &        0\\
       0 &        0 &        0 &        0 &        0 &    \beta\\
       0 &        0 &        0 &        0 &   2\beta &        1\\
       0 &        0 &        0 &    \beta &        1 &        0
\end{bmatrix} .
\end{equation}
The forms for the related canonical pair
$\left(
\left[
\begin{smallmatrix}
       1 &        0 &        0\\
       0 &        0 &        -1\\
       0 &        -1 &        0
\end{smallmatrix}
\right]
,
\left[
\begin{smallmatrix}
  \alpha &        0 &        0\\
       0 &        0 &    \beta\\
       0 &    \beta &        -1
\end{smallmatrix}
\right]
\right)$ are similar.

Next pair to consider is
\begin{equation}
\left(
\begin{bmatrix}
       0 &        0 &        1\\
       0 &        1 &        0\\
       1 &        0 &        0
\end{bmatrix}
,
\begin{bmatrix}
       0 &        0 &   \alpha\\
       0 &   \alpha &        1\\
  \alpha &        1 &        0
\end{bmatrix}
\right) ,
\end{equation}
for $\alpha \in \R$.
After multiplying the forms we get
\begin{equation} \label{form2B}
B = \begin{bmatrix}
       0 &        0 &        0 &        0 &        0 &   \alpha\\
       0 &        0 &        0 &        0 &  2\alpha &        1\\
       0 &        0 &  2\alpha &        0 &        1 &        0\\
       0 &        0 &        0 &   \alpha &        1 &        0\\
       0 &  2\alpha &        1 &        1 &        0 &        0\\
  \alpha &        1 &        0 &        0 &        0 &        0
\end{bmatrix} .
\end{equation}

Finally we get to the canonical pair
\begin{equation}
\left(
\begin{bmatrix}
       1 &        0 &        0\\
       0 &        0 &        1\\
       0 &        1 &        0
\end{bmatrix}
,
\begin{bmatrix}
  \alpha &        0 &        0\\
       0 &        0 & \beta-i\gamma\\
       0 & \beta+i\gamma &        0
\end{bmatrix}
\right) ,
\end{equation}
for $\alpha, \beta \in \R$ and $\gamma > 0$.  We note that we can just rescale to have $\gamma = 1$
if needed.
After multiplying the forms we get
\begin{equation} \label{form3B}
B = \begin{bmatrix}
  \alpha &        0 &        0 &        0 &        0 &        0\\
       0 &        0 & \alpha+\beta-i\gamma &        0 &        0 &        0\\
       0 & \alpha+\beta+i\gamma &        0 &        0 &        0 &        0\\
       0 &        0 &        0 &        0 &        0 & \beta-i\gamma\\
       0 &        0 &        0 &        0 & 2\beta &        0\\
       0 &        0 &        0 & \beta+i\gamma &        0 &        0
\end{bmatrix} .
\end{equation}

Once we have the matrix $B$, we write $B$ as a sum of rank one matrices to obtain a representative
of the class of maps given by $B$.
We must also still change variables to put $J$
into the form
$
\left[
\begin{smallmatrix}
       1 &        0 &        0\\
       0 &        1 &        0\\
       0 &        0 &        -1
\end{smallmatrix}
\right]$, so that we always have the same ball as our source.
We could also change
variables first and then write $B$ as a sum of rank one matrices.


\section{Degree 2 maps of \texorpdfstring{$S^{2n-1}$ to $S^{2N-1}$}{S(2n-1) to S(2N-1)}} \label{section:deg2mapsspheres}

In this section we will prove that all degree-two maps of spheres are
Q-equivalent (that is, spherically equivalent) to a monomial map, and then we
will give the classification of those monomial maps.
First, let us prove that all degree-two maps are equivalent to monomial maps.

\begin{lemma} \label{lemma:mapsmonomial}
Let $f \colon S^{2n-1} \to S^{2N-1}$, $n \geq 2$, be a rational CR map
of degree 2.  Then $f$ 
is spherically equivalent to a monomial map.
\end{lemma}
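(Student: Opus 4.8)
The plan is to reduce, via the Horn--Sergeichuk classification of pairs of Hermitian matrices under simultaneous $*$-congruence (the theorem stated above), to the case $n=2$ with $(J,A)$ both $3\times 3$, and then to check the finitely many non-diagonalizable normal forms one at a time, showing that in each case either the signature of the resulting $B$ is wrong for a sphere target, or else $B$ can be re-diagonalized so that the map is spherically equivalent to a monomial one. First I would recall from \S\ref{section:deg2maps} that for a degree-two CR map $f$ of $S^{2n-1}$ we have $\langle V\hat f(z),\hat f(z)\rangle = \langle Az,z\rangle\langle Jz,z\rangle$ with $J$ defining the sphere, and that if the pair $(J,A)$ is simultaneously diagonalizable by $*$-congruence then $f$ is spherically equivalent to a monomial map (this is exactly the argument already given in the excerpt producing a pure monomial term). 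So the only thing to rule out is the existence of a larger-than-$1\times1$ block in the common normal form; and since $J$ must have exactly one negative eigenvalue and no zero eigenvalue, at most one block can be non-trivial, and that block is $2\times 2$ or $3\times 3$ (as listed: $M_2$, $\pm\Delta_2(1,0)$, $\Delta_3(1,0)$, or $\left[\begin{smallmatrix}0&I_1\\ I_1&0\end{smallmatrix}\right]$). Hence it suffices to treat source dimension $n=2$ with $J,A$ both $3\times 3$ and the $J$-side of the normal form being one of these non-diagonal blocks.

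Next I would go through the three families of normal forms $B$ already computed in \S\ref{section:deg2maps}, namely \eqref{form1B}, \eqref{form2B}, \eqref{form3B} (the $\Delta_2$, the $\left[\begin{smallmatrix}0&1\\ 1&0\end{smallmatrix}\right]$, and the Jordan-block cases), and for each one determine the inertia of $B$ as a function of the real parameters $\alpha,\beta,\gamma$. Since the target is a \emph{sphere}, $V = F^*{}\cdot{}$ must have exactly one negative eigenvalue (the single $-1$ in the hyperquadric $Q(N,0)$ written homogeneously is $Q(N,1)$-style, i.e.\ one $-1$); so $B$ must have exactly one negative eigenvalue and the rest nonnegative. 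Each $B$ above is block-diagonal into small $2\times 2$ or $3\times 3$ pieces, so its inertia is computable by hand: for instance in \eqref{form1B} the block $\left[\begin{smallmatrix}0&2\beta&1\\ \beta&1&0\end{smallmatrix}\right]$-type piece and the $\left[\begin{smallmatrix}0&\alpha+\beta\\ \alpha+\beta&1\end{smallmatrix}\right]$ piece each already contribute a negative eigenvalue whenever the relevant off-diagonal entry is nonzero, so two negatives appear unless parameters degenerate — and the degenerate parameter values are precisely the ones that collapse the $\Delta_2$ block back to diagonal. I expect that in every branch the requirement ``exactly one negative eigenvalue'' forces $\alpha,\beta,\gamma$ into the degenerate locus, at which point $B$ is diagonalizable and the map is monomial; the Jordan-block case \eqref{form3B} with $\alpha+i\beta=i$ (the $iN_2$ case) should be handled the same way, noting $2\beta$ and the $\pm i\gamma$ entries together again over-count negatives.

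The main obstacle is the bookkeeping of signatures: one must be careful that $V$ for a sphere target really does mean ``one negative eigenvalue'' in the homogeneous picture, and that when $B$ has the right inertia the sum-of-rank-one-matrices decomposition from \S\ref{section:hermforms} genuinely yields a map with \emph{linearly independent} components landing on a sphere (so that Lemma~\ref{formprop} / its corollary applies and ``spherically equivalent'' is legitimate). A secondary subtlety is that after diagonalizing we must still conjugate $J$ into the standard diagonal form $\operatorname{diag}(1,1,-1)$ while keeping $A$ diagonal — but since at the degenerate parameter values the whole pair is simultaneously diagonal this is just reordering and rescaling basis vectors, which the argument in the excerpt already covers. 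Finally, once $n=2$ is done, the reduction to general $n$ is immediate: any non-trivial common block lives in a coordinate subspace of dimension $\le 3$, the $J$-restriction to it still defines (a lower-dimensional) sphere/ball, so the $n=2$ analysis applies verbatim to that block while the remaining $1\times1$ blocks are already diagonal; hence $f$ is spherically equivalent to a monomial map for every $n\ge 2$.
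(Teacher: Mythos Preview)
Your overall plan matches the paper's: reduce to $n=2$ via Horn--Sergeichuk, then run through the non-diagonal canonical families \eqref{form1B}--\eqref{form3B}. But you mis-describe the endgame. You write that requiring exactly one negative eigenvalue ``forces $\alpha,\beta,\gamma$ into the degenerate locus, at which point $B$ is diagonalizable and the map is monomial,'' and that degenerate parameters ``collapse the $\Delta_2$ block back to diagonal.'' That cannot happen: the Horn--Sergeichuk canonical form is \emph{unique} up to permutation of blocks, so once the pair $(J,A)$ lies in a non-diagonal block type it is never simultaneously $*$-congruent to a diagonal pair, for any value of $\alpha,\beta,\gamma$. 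The argument is purely by contradiction, as the paper makes explicit: one checks that for \emph{every} parameter value in each non-diagonal family, $B$ has at least two negative eigenvalues (or, at the only exceptional values such as $\alpha=\beta=0$ in \eqref{form1B}, the matrix $A$ drops rank and the map is no longer of degree two). Hence no degree-two sphere map has a non-diagonal canonical pair, and $(J,A)$ must have been simultaneously diagonalizable from the outset. Your proposed inertia computation is exactly the right thing to do; you just need to correct your expectation of what it yields.

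There is also a small gap in the diagonalizable case. The \S\ref{section:deg2maps} argument you invoke shows only that the homogeneous map is monomial, i.e.\ gives a monomial CR map from \emph{some} $Q(a,b)$ equivalent to $S^{2n-1}$ in a different affine chart. To conclude the map is monomial in the standard sphere coordinates (no negative exponents after dehomogenizing by the last source variable) the paper supplies an extra step: if the unique negative-eigenvalue monomial $z^\alpha$ in $B$ were divisible by some $z_j$ corresponding to a positive eigenvalue of $J$, then setting $z_j=0$ forces $\lVert f'(0,z',z_n)\rVert^2$ to vanish on the real hypersurface $\lVert z'\rVert^2=\lvert z_n\rvert^2$, hence identically, so every component of $f$ is divisible by $z_j$ and the map was not in lowest terms. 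Thus $z^\alpha=z_n^2$ and the dehomogenized map is genuinely monomial from $S^{2n-1}$.
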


\begin{proof}
Suppose we have a degree-two map of spheres.  That is, we
have a Hermitian matrix $A$, such that
\begin{equation}
\langle A z , z \rangle \langle J z , z \rangle
\end{equation}
has only one negative eigenvalue.
$J = \operatorname{diag}(1,1,\ldots,1,-1)$ is the matrix
giving
us a sphere, that is, it has ones along the diagonal except
the last diagonal element, which is $-1$.
Unlike above, we start in arbitrary source dimension,
but we will see that considering
$n=2$ is sufficient.

If we can diagonalize $A$ and $J$ simultaneously by
$*$-congruence, we are essentially done.  The map will be
equivalent to a monomial map if we, for a moment, allow some
exponents to be negative in nonhomogeneous coordinates.  We have already
remarked that the map has no negative exponents in some set of nonhomogeneous
coordinates,
but we wish it to be so in the nonhomogeneous coordinates where $\bV_J$ is
$S^{2n-1}$.

Negative exponents in nonhomogeneous coordinates mean
that the monomial corresponding to the negative
eigenvalue in the form for $f$ is divisible by one of the variables
corresponding to a positive eigenvalue for $J$.  That is, take the
homogeneous coordinates $z = (z_0,z',z_n) \in \C \times \C^{n-1} \times \C =
\C^{n+1}$ and write
\begin{equation}
\norm{f'(z)}^2 - \abs{z^\alpha}^2 = q(z,\bar{z}) \bigl(\abs{z_0}^2 + \norm{z'}^2
- \abs{z_n}^2 \bigr) .
\end{equation}
for some bihomogeneous real polynomial $q$.  We denote by $f'$ the part of
the monomial map corresponding to positive eigenvalues.  We can assume that
the components of $f'$ together with the monomial $z^\alpha$ have no common
factor.  If $z^\alpha = z_n^d$ we are done.

Therefore, assume that after perhaps
renaming of variables, $z^\alpha$ is divisible by $z_0$.
We set $z_0 = 0$ to obtain
$\norm{f'(0,z',z_n)}^2 = q(0,z',z_n,0,\bar{z}',z_n) \bigl(\norm{z'}^2
- \abs{z_n}^2 \bigr)$.  $f'(0,z',z_n)$ vanishes on a real hypersurface
and hence is
identically zero.  This means that $f'$ is divisible by $z_0$, that is, the
map is not given in lowest terms, a contradiction.  So if we can diagonalize,
the map is monomial, with the usual understanding that in nonhomogeneous
coordinates
where the source is the sphere, no exponent is negative.

We focus on the case where we cannot diagonalize.  Note that we
are seeking a contradiction.
As above,
we find the canonical form for the pair $(J,A)$ and we
assume that there is some block
larger than $1 \times 1$ in the canonical form for $J$.
Again we note that all such blocks for $J$ that have at most one negative
eigenvalue are either $2 \times 2$ or $3 \times 3$.  Therefore, if we prove
the result for source dimension 2, then we are done; if there were a map
not equivalent to a monomial map for higher source dimension, then we can
set all but 3 variables that correspond to
$1 \times 1$ blocks to zero.  We obtain a CR map of spheres
with source dimension 2
that is not equivalent to a monomial map, as the surviving
$2 \times 2$ or $3 \times 3$ block is a canonical block and therefore
the corresponding block in $A$ is not simultaneously diagonalizable.
If we can show that no such map can exist, then all blocks in the
canonical form of the pair $(J,A)$ must have been $1 \times 1$
and therefore the matrices must have been simultaneously diagonalizable.
Hence assume that
$J$ and $A$ are $3 \times 3$ just as they are above.

We have already computed the matrices $B$, resulting from
$\langle A z, z \rangle \langle J z, z \rangle  = \langle B \sZ , \sZ
\rangle$.  Let us consider these
matrices one by one.

First, let us take the situation corresponding to
the block $\Delta_2$ in $J$.  That is, for $(J,A)$
consider canonical form 
$
\left(
\left[
\begin{smallmatrix}
       1 &        0 &        0\\
       0 &        0 &        1\\
       0 &        1 &        0
\end{smallmatrix}
\right]
,
\left[
\begin{smallmatrix}
  \alpha &        0 &        0\\
       0 &        0 &    \beta\\
       0 &    \beta &        1
\end{smallmatrix}
\right]
\right)
$.
The corresponding $B$ is computed in \eqref{form1B}.
If $\alpha+\beta \not= 0$, then
this form clearly has at least two negative eigenvalues (notice the block
structure). 
If $\beta = -\alpha \not= 0$, then it is also not hard to see that there must
still be at least two negative eigenvalues.  When $\beta = \alpha = 0$,
$A$ has rank one, and we have a first degree map.
Similar calculations show the same result for the related canonical
form
$\left(
\left[
\begin{smallmatrix}
       1 &        0 &        0\\
       0 &        0 &        -1\\
       0 &        -1 &        0
\end{smallmatrix}
\right]
,
\left[
\begin{smallmatrix}
  \alpha &        0 &        0\\
       0 &        0 &    \beta\\
       0 &    \beta &        -1
\end{smallmatrix}
\right]
\right)$.

Next let us consider the pair
$
\left(
\left[
\begin{smallmatrix}
       0 &        0 &        1\\
       0 &        1 &        0\\
       1 &        0 &        0
\end{smallmatrix}
\right]
,
\left[
\begin{smallmatrix}
       0 &        0 &   \alpha\\
       0 &   \alpha &        1\\
  \alpha &        1 &        0
\end{smallmatrix}
\right]
\right)$.
The corresponding $B$ is computed in \eqref{form2B}.
This form also always has at least two negative eigenvalues.  To see this fact,
notice that the form has a zero eigenvalue only when $\alpha = 0$.  Hence,
it is enough to check the signature of the matrix for $\alpha  = 0$.

Finally we get to the canonical pair
$\left(
\left[
\begin{smallmatrix}
       1 &        0 &        0\\
       0 &        0 &        1\\
       0 &        1 &        0
\end{smallmatrix}
\right]
,
\left[
\begin{smallmatrix}
  \alpha &        0 &        0\\
       0 &        0 & \beta-i\gamma\\
       0 & \beta+i\gamma &        0
\end{smallmatrix}
\right]
\right)$,
for $\gamma > 0$.
The corresponding $B$ is computed in \eqref{form3B}.
Again, because of the block structure and the fact that $\gamma \not= 0$,
we easily see that this form must have at least two negative eigenvalues.

We are done.  We have dealt with all the canonical forms and have shown
that the canonical form for $(J,A)$ must contain only $1 \times 1$ blocks,
and hence it must be monomial.  So the lemma is proved.
\end{proof}

\begin{remark}
Do note that when $n = 1$, the theorem is not true.  It is not hard to create
counterexamples.  For example,
we can use the techniques above to construct a counterexample.
Take the canonical pair
$\left(
\Bigl[
\begin{smallmatrix}
       0 &        1\\
       1 &        0
\end{smallmatrix}
\Bigr]
,
\Bigl[
\begin{smallmatrix}
    0 &    i \\
   -i &    0
\end{smallmatrix}
\Bigr]
\right)$.  With $\sZ = (z_0^2,z_0z_1,z_1^2)$, the form $B$ becomes
\begin{equation}
\begin{bmatrix}
            0 &        0 &             i \\
            0 &        0 &             0 \\
           -i &        0 &             0
\end{bmatrix}
.
\end{equation}
We find the eigenvalues $1$ and $-1$ and the corresponding
orthonormal eigenvectors
$[ 1 ~ 0 ~ {-i} ]^T / \sqrt{2}$ and 
$[ 1 ~ 0 ~ i ]^T / \sqrt{2}$.  To construct the map we scale both
eigenvectors by the same number to get rid of the $\sqrt{2}$.  In 
homogeneous coordinates, the map is
$( z_0^2 - iz_1^2 , z_0^2 + iz_1^2 )$.  We are not done since we want to
change to the standard coordinates for the sphere.  So we must precompose
with a linear map that takes $J$ to
$\left[ \begin{smallmatrix} 1 & 0 \\ 0 & -1 \end{smallmatrix} \right]$.
We also multiply the first component by $i$ for simplicity.
In nonhomogeneous coordinates the map is
\begin{equation}
z \mapsto
\frac%
{z^2+2iz+1}%
{z^2-2iz+1} .
\end{equation}
We know this map cannot be equivalent to a monomial map because we used the
canonical blocks that were not diagonalizable.
It is not
hard to see that any polynomial degree-two map of the disc to the disc must
be $e^{i\theta}z^2$ for some $\theta \in \R$, so the map is also not equivalent to any polynomial map.
Other examples can be constructed in a similar way.
\end{remark}

Next let us classify the degree-two monomial maps.  The classification in
general follows from the work of D'Angelo~\cites{D:poly,D:prop1},
see also \cites{DLP,DL:complex,DL:families}.
First, the following lemma.

\begin{lemma} \label{monequiv}
Let $f$ and $g$ be monomial CR maps of spheres that take
$0$ to $0$,
\begin{equation}
f(z) = \bigoplus_\alpha a_\alpha z^\alpha , \quad
g(z) = \bigoplus_\alpha b_\alpha z^\alpha ,
\end{equation}
where each monomial is distinct.
Then $f$ and $g$ are spherically equivalent if and only if
there exists a permutation
$\sigma$ of
the variables taking $\alpha$ to $\sigma(\alpha)$ and
$\lvert a_\alpha \rvert = \lvert b_{\sigma(\alpha)} \rvert$.
\end{lemma}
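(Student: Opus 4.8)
The plan is to exploit the rigidity of monomial maps together with the basic consequence of Lemma~\ref{formprop} (via its corollary) that spherical equivalence of $f$ and $g$ is equivalent, after homogenizing, to the existence of a source automorphism $\chi \in U(a,b)$ and a scalar $\lambda>0$ with $\langle V\hat f(\chi z),\hat f(\chi z)\rangle = \lambda\langle V\hat g(z),\hat g(z)\rangle$, where $V=\operatorname{diag}(1,\ldots,1,-1)$ defines the target sphere. Since each component of $f$ and $g$ is a distinct monomial, writing out both sides gives
\begin{equation}
\sum_\alpha \lvert a_\alpha\rvert^2 \lvert (\chi z)^\alpha\rvert^2 = \lambda \sum_\alpha \lvert b_\alpha\rvert^2 \lvert z^\alpha\rvert^2 .
\end{equation}
The ``if'' direction is immediate: a permutation $\sigma$ of the variables is an element of $U(n,0)$ (in homogeneous coordinates a permutation matrix, which trivially preserves the sphere), and composing $f$ with it turns $a_\alpha z^\alpha$ into $a_\alpha z^{\sigma(\alpha)}$; if $\lvert a_\alpha\rvert = \lvert b_{\sigma(\alpha)}\rvert$ for all $\alpha$ this is exactly $g$ up to the moduli of the coefficients, and the remaining phases are absorbed by a diagonal unitary on the target, which lies in the automorphism group of the target sphere. (One should remark that spheres are among the cases where $c\neq d+1$, so only $\lambda>0$ occurs, and here $\lambda=1$ since both maps send $0$ to $0$ and hence both $\langle V\hat f,\hat f\rangle$ and $\langle V\hat g,\hat g\rangle$ equal the source form $\langle Jz,z\rangle$ times a polynomial that is $1$ at the relevant point.)

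For the ``only if'' direction, the main work is to show that the source automorphism $\chi$ realizing the equivalence can be taken to be a permutation of the variables. The strategy is to analyze how a general $\chi \in U(n,0)$ (an automorphism of $S^{2n-1}$, linear-fractional, fixing nothing a priori) can act on a sum of squared moduli of distinct monomials and still return a sum of squared moduli of distinct monomials. First I would reduce to the polynomial picture: since $f(0)=g(0)=0$, a standard normalization argument shows $\chi$ must fix the origin, hence is a unitary rotation $U \in U(n)$ (the isotropy subgroup of $0$ in $\operatorname{Aut}(S^{2n-1})$). So the identity becomes $\sum_\alpha \lvert a_\alpha\rvert^2 \lvert (Uz)^\alpha\rvert^2 = \sum_\alpha \lvert b_\alpha\rvert^2 \lvert z^\alpha\rvert^2$ for a unitary $U$. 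Now I would argue that $U$ must be a ``signed-and-phased permutation'' (a monomial unitary matrix): expand $\lvert (Uz)^\alpha\rvert^2$ as a polynomial in $z,\bar z$; because the right-hand side has no cross terms $z^\alpha\bar z^\beta$ with $\alpha\neq\beta$, all such cross terms on the left must cancel. The cleanest way to force this is to look at the terms of lowest total degree and use the distinctness of the monomials together with the algebraic independence of the $\lvert z_j\rvert^2$ to conclude that each row of $U$ has a single nonzero entry; the diagonal phases and the overall scaling then get absorbed into target automorphisms as in the ``if'' direction, leaving only the underlying permutation $\sigma$, and comparing coefficients gives $\lvert a_\alpha\rvert = \lvert b_{\sigma(\alpha)}\rvert$.

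The step I expect to be the genuine obstacle is precisely showing that the rotational part $U$ of the source automorphism is forced to be a monomial matrix — that is, that a nontrivial unitary mixing of the variables cannot send one ``monomial-squared-modulus sum'' to another. Morally this is because applying a generic $U$ creates cross terms that cannot be killed, but turning ``morally'' into a rigorous argument requires care about the multidegree structure: one wants to isolate, say, the pure powers $\lvert z_j\rvert^{2d}$ and the mixed monomials and track which entries of $U$ can be nonzero. I would handle this by inducting on the degree $d$ (here $d=2$, which is all we need for the application, so one could even argue the degree-two case by hand — the monomials are $z_j^2$ and $z_jz_k$ — but the general statement of the lemma wants all $d$), or by citing the relevant rigidity already present in D'Angelo's work on monomial proper maps \cites{D:poly,D:prop1,DLP,DL:complex,DL:families} as the lemma's preamble suggests. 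Once that rigidity is in hand, the rest of the argument is the bookkeeping of phases and scalars described above.
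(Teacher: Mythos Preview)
Your outline is close in spirit to the paper's \emph{general} argument, but there is a genuine gap, and you have mislocated it. The step ``since $f(0)=g(0)=0$, a standard normalization argument shows $\chi$ must fix the origin'' is not a normalization triviality: from $f\circ\chi=\tau\circ g$ and $f(0)=g(0)=0$ one only gets $f(\chi(0))=\tau(0)$, which says nothing about $\chi(0)$. The assertion that the equivalence can be realized with $\chi(0)=0$ (equivalently, that spherical equivalence of origin-preserving polynomial sphere maps reduces to unitary equivalence) is precisely D'Angelo's theorem from \cite{D:prop1}, which the paper invokes as a black box. So the D'Angelo input is needed \emph{before} you reach your ``genuine obstacle,'' not after; once you have $U$ unitary, the paper dispatches the ``$U$ is a monomial matrix'' step by a short argument different from your cross-term cancellation: for each $k$ the point $e_k$ lies on the sphere, so some component of $f$ is a pure power $z_k^{d_k}$, and then the multinomial theorem forces $U$ to permute the coordinate axes.

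You should also note that the paper gives a second, self-contained proof tailored to the degree-two case (which is all that is needed downstream), and it takes a completely different route that bypasses D'Angelo entirely. One writes $\lVert f\rVert^2-1$ and $\lVert g\rVert^2-1$ in homogeneous coordinates and extracts the Hermitian pair $(J,A_f)$ and $(J,A_g)$ as in \S\ref{section:deg2maps}. Because $f$ and $g$ are monomial, both $A_f$ and $A_g$ are already diagonal, hence already in Horn--Sergeichuk canonical form; spherical equivalence of $f$ and $g$ translates (via the corollary to Lemma~\ref{formprop}) into simultaneous $*$-congruence of $(J,A_f)$ and $(J,A_g)$, and uniqueness of the canonical form up to permutation of blocks immediately gives the permutation $\sigma$ and $\lvert a_\alpha\rvert^2=\lvert b_{\sigma(\alpha)}\rvert^2$. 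This is the cleaner argument in the paper's framework, and it avoids both the D'Angelo reduction and the combinatorics of showing $U$ is monomial.
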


The proof follows by results of \cite{D:prop1}; polynomial proper maps of
balls taking origin to origin are spherically equivalent
if and only if they are unitarily
equivalent.  That is, $f$ and $g$ are spherically equivalent if and only if
$f = g \circ U$ for a unitary matrix $U$.
By setting all but one of the
variables to zero, that is $z = (0,\ldots,0,1,0,\ldots,0)$,
with $z_k = 1$,
we find that $z$ is in the sphere and therefore
$f(z)$ is in the sphere.  Thus at least one component of
$f(z)$ must be nonzero, that is at least one of the monomials
must depend only $z_k$.  This was true for all $k$ and
therefore
there must be at least one pure monomial for each
variable.  The result follows by application of multinomial theorem.
We produce a slightly different proof for degree-two maps for convenience
and to illustrate the methods of this paper.

\begin{proof}[Proof for degree-two maps]
One direction is simple, if there is a permutation of the
variables such that
$\lvert a_\alpha \rvert = \lvert b_{\sigma(\alpha)} \rvert$,
then obviously 
$f$ and $g$ are spherically equivalent.

For the other direction, suppose that $f$ and $g$ are monomial, degree-two, 
and spherically equivalent.
We write down the forms corresponding to
$\norm{f(z)}^2 - 1$ and $\norm{g(z)}^2 - 1$ in homogeneous coordinates
and find the matrix $A_f$ for $f$ and $A_g$ for $g$.  We notice
that these matrices must be diagonal.  They are in canonical form,
and the canonical form is canonical up to permutation of the blocks (hence
permutation of variables).  Hence after a permutation of variables
we get that the diagonal entries of $A_f$ and $A_g$ are equal and hence
$\lvert a_\alpha \rvert^2 = \lvert b_{\sigma(\alpha)} \rvert^2$.
\end{proof}

Hence the maps in \eqref{alldeg2maps} are all spherically inequivalent.
We will give a simple proof that the list is exhaustive, but we also note
that this follows easily from the general theory of polynomial proper maps
developed by D'Angelo.
In
\cite{DAngelo:CR} it is shown that all degree-two polynomial maps
preserving the origin
are given in the form $Lz \oplus (\sqrt{I-L^*L}\, z) \otimes z$.  
By Lemma~\ref{lemma:mapsmonomial},
we only need to consider diagonal $L$.
See also
\cites{DKR,DLP} for more details on the classification of monomial
maps.  In general, all polynomial (and hence monomial) maps are obtained
by a finite series of partial ``tensorings'' and ``untensorings.''  Using that
language, it is also not hard to see that the list \eqref{alldeg2maps}
is exhaustive.  For simplicity, we will not use this language here.

We will give
an elementary
argument in the real-algebraic language.
In the form $\norm{f(z)}^2$ replace
$\abs{z_1}^2$ with $x_1$,
$\abs{z_2}^2$ with $x_2$, etc\ldots.
Hence for a monomial map $f$, the form $\norm{f(z)}^2$ will become
a real polynomial in $x_1,\ldots,x_n$ with nonnegative coefficients.
This polynomial gives the equivalence class of all monomial maps up to
postcomposing with a diagonal unitary matrix of course.
So we must
classify
all real polynomials $p(x)$ of degree 2 with nonnegative coefficients
such that $p(x)=1$ when $x_1+\cdots+x_n = 1$.

Without any loss of generality,
we allow adding zero components to the map.  We have defined
spherical equivalence of maps with different target dimensions.
The following lemmas (and the theorem)
could be given without this convenience, but would be more
complicated to state.

\begin{lemma}
Any monomial map of spheres is spherically equivalent to a monomial map
taking the origin to the origin by postcomposing with
a diagonal matrix.
\end{lemma}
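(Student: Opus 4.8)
The plan is to strip off a constant component, if one is present, by a single automorphism of the target ball, and then to observe that this automorphism, composed with our particular map, acts exactly as a constant diagonal matrix.

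First I would normalize the map. Writing $f=\bigoplus_\alpha a_\alpha z^\alpha$ with the monomials $z^\alpha$ distinct, the only monomial that does not vanish at the origin is the constant one $z^0=1$. If it does not occur, then $f(0)=0$ and we are done with the diagonal matrix equal to the identity. Otherwise some component, say the $j_0$-th, is the constant $a_0$, while every other component is $a_j z^{\alpha_j}$ with $\alpha_j\neq 0$; thus $f(0)=a_0 e_{j_0}$, and since a nonconstant rational CR map of spheres extends to a proper holomorphic map of balls, $\abs{a_0}=\norm{f(0)}<1$.

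The key step will be to post-compose with the involutive automorphism $\varphi_a$ of the ball $\bB^{N}$ that interchanges $a:=a_0 e_{j_0}$ and $0$. In standard coordinates this has $j_0$-th component $\dfrac{a_0-w_{j_0}}{1-\bar a_0 w_{j_0}}$ and $j$-th component $\dfrac{-\sqrt{1-\abs{a_0}^2}\,w_j}{1-\bar a_0 w_{j_0}}$ for $j\neq j_0$. Because the $j_0$-th component of $f$ is the genuine constant $a_0$, the denominator evaluated along $f$ is the constant $1-\abs{a_0}^2$; hence $\varphi_a\circ f$ has $j_0$-th component $0$ and $j$-th component $\tfrac{-1}{\sqrt{1-\abs{a_0}^2}}\,a_j z^{\alpha_j}$ for $j\neq j_0$. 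In other words $\varphi_a\circ f=D\circ f$, where $D$ is the diagonal matrix with $0$ in the $j_0$-th slot and $\tfrac{-1}{\sqrt{1-\abs{a_0}^2}}$ elsewhere. The map $D\circ f$ is again monomial (one component identically zero, which is permitted), and every surviving monomial has nonzero exponent, so $D\circ f$ takes the origin to the origin.

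Since $\varphi_a$ restricts to an automorphism of $S^{2N-1}$, the maps $f$ and $D\circ f=\varphi_a\circ f$ are spherically equivalent, which completes the argument; a further post-composition with a diagonal unitary, if one wishes, makes the nonzero entries of $D$ positive, and this is again an automorphism of the target sphere. I expect the only delicate point to be the verification that $\varphi_a\circ f$ really collapses to a constant diagonal rescaling — which rests entirely on the $j_0$-th component of $f$ being a true constant — together with the (standard) observation that properness forces $\abs{a_0}<1$ so that $\varphi_a$ is defined. If one does not assume the monomials distinct and several components are constant, a preliminary unitary rotation among those coordinates reduces to the case of a single constant component.
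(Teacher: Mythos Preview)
Your argument is correct, but it takes a different route from the paper's. The paper works entirely in the real-algebraic picture: writing $p(x)=\sum_\alpha \lvert a_\alpha\rvert^2 x^\alpha$ for the squared-norm polynomial (with $x_j=\lvert z_j\rvert^2$), it simply observes that $p(0)<1$ unless the map is trivial and that $\dfrac{p(x)-p(0)}{1-p(0)}$ is again a polynomial with nonnegative coefficients, no constant term, and value $1$ on $x_1+\cdots+x_n=1$; the induced new monomial map differs from the old one by the diagonal rescaling you found. You instead stay on the geometric side, exhibit the explicit involutive ball automorphism $\varphi_a$ that sends $f(0)$ to $0$, and verify that on this particular $f$ (because one component is a genuine constant) $\varphi_a$ collapses to the same diagonal matrix. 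Your approach makes the spherical equivalence completely explicit---one sees exactly which target automorphism realizes it---whereas the paper's approach is shorter and meshes with the $p(x)$ framework used throughout that section but leaves the reader to recognize that the diagonal operation is the trace of a genuine automorphism of the target sphere. Both reach the same diagonal matrix $D$ with entries $0$ and $-1/\sqrt{1-\lvert a_0\rvert^2}$ (up to the sign, which you correctly note can be absorbed by a diagonal unitary).
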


\begin{proof}
Suppose that $p(x)$ is a polynomial
with nonnegative coefficients,
such that $p(x) = 1$ whenever 
$x_1+\cdots+x_n = 1$, and $p(0) \not= 0$.  Note that $p(0) < 1$ unless $p$ is
trivial.  Then $\frac{p(x)-p(0)}{1-p(0)}$ is a
polynomial with nonnegative coefficients with no constant term that is one
on $x_1+\cdots+x_n = 1$.  The equivalence of the induced maps is by a
diagonal matrix.
\end{proof}

\begin{lemma}
All degree-two monomial maps of spheres taking the origin to the origin
are spherically equivalent to a map of the form
\eqref{alldeg2maps} by composing with
permutation matrices and a unitary matrix.
\end{lemma}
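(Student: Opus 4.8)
The plan is to work entirely with the real polynomial $p(x)$ in $x_1,\dots,x_n$ attached to a monomial map, exactly as in the paragraphs preceding the lemma: substituting $x_j$ for $\abs{z_j}^2$ turns $\norm{f(z)}^2$ into a degree-two polynomial $p(x)$ with nonnegative coefficients and no constant term (the latter because $f$ sends $0$ to $0$), and the sphere condition becomes $p(x)=1$ whenever $x_1+\cdots+x_n=1$. It suffices to show that every such $p$ equals the polynomial $\sum_j t_j x_j + \sum_j (1-t_j)x_j^2 + \sum_{i<j}(2-t_i-t_j)x_ix_j$ coming from a map \eqref{alldeg2maps}, after sorting the parameters. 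The passage from $p$ back to $f$ costs only a diagonal unitary in the target, since $p$ remembers only the squared moduli of the coefficients of $f$ and distinct monomials of degree $\le 2$ in $z$ correspond to distinct monomials in the $x_j$; combined with the variable permutation used to sort the $t_j$, this is precisely the equivalence allowed in the statement, and one may also appeal directly to Lemma~\ref{monequiv}.

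First I would promote the identity $p(x)=1$ from the simplex $\{x_j\ge 0,\ \sum_j x_j=1\}$ to the entire affine hyperplane $H=\{\sum_j x_j=1\}$: parametrizing $H$ by $x_n=1-x_1-\cdots-x_{n-1}$, the polynomial $p-1$ restricts to a polynomial in $x_1,\dots,x_{n-1}$ that vanishes on a nonempty open set and is therefore identically zero on $H$. Dividing $p(x)-1$ by the linear form $\sum_j x_j-1$ (monic in $x_1$) and using that the remainder, a polynomial in $x_2,\dots,x_n$, must vanish on $H$ and hence everywhere, I get $p(x)-1=\bigl(\sum_j x_j-1\bigr)\,\ell(x)$ with $\ell(x)=a_0+\sum_j a_j x_j$ of degree at most one. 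Expanding and matching coefficients: the constant term of $p$ is $1-a_0$, forcing $a_0=1$; the coefficient of $x_j$ is $1-a_j$; the coefficient of $x_j^2$ is $a_j$; and the coefficient of $x_ix_j$ with $i<j$ is $a_i+a_j$. Thus nonnegativity of all coefficients of $p$ is equivalent to $0\le a_j\le 1$ for all $j$, and setting $t_j:=1-a_j$ makes $p$ exactly the polynomial associated to \eqref{alldeg2maps}.

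It then remains to sort $0\le t_1\le\cdots\le t_n\le 1$ by a permutation of the variables, and to note that the degree-two hypothesis forces $\ell\not\equiv 1$, i.e.\ some $a_j\ne 0$, i.e.\ $(t_1,\dots,t_n)\ne(1,\dots,1)$ — the excluded case, in which $p(x)=\sum_j x_j$ is linear. Reading $p$ back as $\norm{g(z)}^2$ for the corresponding map \eqref{alldeg2maps} and invoking Lemma~\ref{monequiv} (or simply that two polynomial sphere maps with equal squared norms differ by a unitary after padding with zero components) finishes the argument. There is no deep obstacle here: the crux is the divisibility step $p-1=(\sum_j x_j-1)\ell$, after which the nonnegativity and no-constant-term constraints cut out exactly the parameter region in \eqref{alldeg2maps}; the only points needing care are the promotion of the sphere identity to all of $H$ and the bookkeeping verifying that the equivalences used really are permutations and unitaries as claimed.
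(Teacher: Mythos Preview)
Your proof is correct and rests on the same divisibility step as the paper: $p-1=(\sum_j x_j-1)\ell$ with $\ell$ affine. The paper organizes this slightly differently---it homogenizes to show the degree-two part $p_2$ is divisible by $\sum_j x_j$, then sets $q=p_1+p_2/(\sum_j x_j)$ and observes $q=\sum_j x_j$ (framing the result as a ``partial tensoring'' of the identity)---whereas you compute the quotient $\ell$ directly and match coefficients; the content is the same.
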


\begin{proof}
Let us show how we construct all the degree-two monomial examples in
\eqref{alldeg2maps}.
Suppose that we have a
polynomial $p(x)$ in $(x_1,\ldots,x_n)$ of degree 2, such that
$p-1$ is divisible by
$(x_1+\cdots+x_n-1)$, $p(0) = 0$, and all coefficients of $p$ are nonnegative.
Write $p = p_1 + p_2$ where $p_1$ is of degree 1, and $p_2$ is of degree 2.
$p_2$ must be divisible by $(x_1+\cdots+x_n)$.  This fact is easy to
see by homogenizing to $p_1(x) t + p_2(x) - t^2$, noting that this polynomial
is divisible by 
$(x_1+\cdots+x_n-t)$ and then setting $t=0$.
We find $q = p_1 + p_2 / (x_1+\cdots+x_n)$.
The polynomial $q-1$ is also divisible by
$(x_1+\cdots+x_n-1)$.  As $q$ is of degree 1,
$q-1$ is a constant multiple of
$(x_1+\cdots+x_n-1)$.  As $q(0)=0$ we must have
$q = x_1+\cdots+x_n$.  So
$p$ was constructed from $q$ by a partial tensoring
operation, which in this language is simply multiplication of a certain
terms of $q$
by $(x_1+\cdots+x_n)$.

Therefore, the list in \eqref{alldeg2maps} is exhaustive, as all the monomial
maps in this list are obtained in the above fashion.
\end{proof}

\begin{remark}
We mentioned
that there exists a third degree rational CR map
of spheres not spherically equivalent to a polynomial one~\cite{FHJZ}.  Let
us give an alternative simple argument using same reasoning as above,
for a slightly weaker statement.
\end{remark}

\begin{prop} \label{exists3deg}
There exists a rational third degree CR rational map $f \colon S^{2n-1} \to S^{2N-1}$
($n \geq 2$), not
spherically equivalent to a polynomial map taking the origin to origin.
In fact, when $n=2$, there exists a real two-dimensional family of such maps.
\end{prop}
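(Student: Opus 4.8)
The plan is to write down the family explicitly and then check four things: each map is rational of degree exactly~$3$; it takes the origin to the origin; it is not spherically equivalent to any polynomial map taking the origin to the origin; and, for $n=2$, distinct parameters give spherically inequivalent maps, so that the family is genuinely two (real) dimensional. For $n>2$ only the existence of one such map is needed, and it is produced by the same construction applied to the $n$-variable family~\eqref{alldeg2maps}, so essentially all the work is for $n=2$.

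For the construction I would start from the degree-two monomial family of Theorem~\ref{thm:deg2class} (for $n=2$, the $S^3\to S^9$ family that is the $n=2$ case of~\eqref{alldeg2maps}) and apply a \emph{rational tensoring}: one partial tensoring on one component followed by a unitary mixing and a partial untensoring on a different, non-aligned component, in the sense of D'Angelo~\cites{D:poly,D:prop1,DAngelo:CR}. Tensoring raises the degree by one, and untensoring a component that is not genuinely divisible by $z$ introduces a genuine, nonconstant denominator; so the output $f_{s,t}$ is rational of degree~$3$, and by choosing the subspaces so that the new constant term vanishes we get $f_{s,t}(0)=0$. (Alternatively one may simply write down a two-parameter formula by hand and verify $\norm{f_{s,t}(z)}^2=1$ on $\norm{z}^2=1$ and that the expression is in lowest terms.)

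To see that $f:=f_{s,t}$ is not spherically equivalent to any polynomial map fixing the origin, suppose it were: $f=\psi\circ g\circ\chi$ with $g$ polynomial, $g(0)=0$, and $\psi$, $\chi$ automorphisms of the target and source balls. Composition with an automorphism (a degree-one rational map) cannot raise the degree, so applying this both to $f=\psi\circ g\circ\chi$ and to $g=\psi^{-1}\circ f\circ\chi^{-1}$ gives $\deg g=\deg f=3$. Write $f=P/Q$ in lowest terms; by construction $Q$ is nonconstant. The component denominators of $\chi$ are powers of the single linear form $\ell(z):=1-\linnprod{z}{a}$ with $a=\chi^{-1}(0)$, so $g\circ\chi=R/\ell^{3}$ for a polynomial $R$ of degree at most~$3$; then $\psi$ contributes the denominator $\ell^{3}-\linnprod{R(z)}{b}$ with $b=\psi^{-1}(0)$. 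Hence $Q$ must divide a polynomial of degree at most~$3$ of the very restricted shape $\ell^{3}-\linnprod{R}{b}$ with $\ell$ linear. Arranging the construction so that $Q$ is incompatible with that shape --- for instance an irreducible quadratic, which a short finite analysis (using that $\ell^{3}$ is the perfect cube of a linear form) shows cannot be a factor of any such polynomial --- yields the contradiction. (One could instead invoke D'Angelo's normal form for degree-three polynomial maps fixing the origin and rule out a match using the parameters as invariants; in fact these maps are not equivalent to \emph{any} polynomial map, the stronger conclusion of~\cite{FHJZ}, but only the origin-fixing version is needed here.)

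Finally, for distinct $(s,t)$ the maps $f_{s,t}$ are spherically inequivalent: the degree-two family one starts from is already a two-parameter family of spherically inequivalent monomial maps by Lemma~\ref{monequiv}, and the tensoring/untensoring operations used can be reversed up to spherical equivalence (equivalently, the parameters are recovered as spherical invariants of $f_{s,t}$, e.g.\ from the Hermitian form attached to $\norm{f_{s,t}(z)}^{2}-1$), so an equivalence of two outputs would force an equivalence of the corresponding inputs. I expect the third point --- excluding equivalence to a polynomial map that \emph{also} fixes the origin --- to be the main obstacle: one cannot argue that an origin-preserving equivalence must be unitary, since the equivalence $f=\psi\circ g\circ\chi$ may move the origin (already for disc maps an origin-fixing rational map can be spherically equivalent to an origin-fixing polynomial one); what saves the situation for $n\ge 2$ is the combination of degree invariance, the rigidity that degree-two rational maps of spheres are polynomial, and the persistence of the essential denominator $Q$ under composition with automorphisms, and making that bookkeeping airtight is the technical heart.
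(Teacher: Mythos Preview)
Your strategy is entirely different from the paper's. The paper constructs no explicit map; it argues by a dimension count on the Hermitian matrix $A$ attached to the map via $\langle B\sZ_3,\sZ_3\rangle=\langle A\sZ_2,\sZ_2\rangle\langle Jz,z\rangle$. For $n=2$ this $A$ is $6\times 6$ Hermitian, and one checks that for a \emph{polynomial} map fixing the origin $A$ must have the block form $A_1\oplus 1$, a condition of complex codimension~$5$. D'Angelo's result that a degree-$d$ polynomial sphere map is determined by its $(d{-}1)$-jet, together with the fact that an open set of such jets is realizable, yields an open set of admissible matrices $A$. Since the source automorphism group has complex dimension~$3$, it is too small to move a generic $A$ into that codimension-$5$ locus, and the surplus produces the real two-dimensional family. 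That is the entire proof.

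Your constructive plan could in principle give more --- actual formulas --- but as written it has genuine gaps at exactly the two places you yourself flag as the technical heart. The denominator step is not carried out: you assert that an irreducible quadratic $Q$ cannot divide any polynomial of the form $\ell^3-\linnprod{R}{b}$, but once $b\neq 0$ this is a fairly general polynomial of degree at most~$3$, constrained only by the requirement that $R/\ell^3$ be a polynomial sphere map composed with a ball automorphism; an irreducible quadratic can certainly divide cubic polynomials, and the ``short finite analysis'' you promise would have to exploit the sphere-map constraint on $R$ in a nontrivial way you do not indicate. Likewise, the mutual inequivalence of distinct parameters rests on the unproved assertion that your tensoring/untensoring steps are reversible up to spherical equivalence, which is not a standard fact. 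The paper sidesteps both issues entirely by never writing down a map.
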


\begin{proof}
First we note what the Hermitian form for a polynomial map taking origin to
origin looks like.  That is, the matrix $B$ in
$\langle B \sZ_3 , \sZ_3 \rangle$, where $\sZ_3$ is the degree-three Veronese
map.
It is not hard to see that $B = B_1 \oplus 1$ for some matrix $B_1$
with one less row and one less column than $B$
and the $1 \times 1$ matrix $1$.  Therefore, let $\sZ_2$
be the degree-two Veronese map and $A$ is the matrix such that
$\langle B \sZ_3 , \sZ_3 \rangle
= \langle A \sZ_2, \sZ_2 \rangle \langle J z , z \rangle$.  Then
$A = A_1 \oplus 1$ (as for $B$).
Suppose that $n=2$ for simplicity, so $A$ is $6 \times 6$.

By results of D'Angelo (see~\cite{DL:families} Proposition 3, for example, for explicit
statement) a polynomial map of spheres of degree $d$ is determined by its
$d-1$ jet (the coefficients of all monomials
of degree $d-1$ or less).
That is, no two distinct polynomial CR maps of spheres
have the same $d-1$ jet.  Furthermore, there exists
an open neighbourhood of the origin in the $d-1$ jet space
such that each such jet gives a CR map of spheres.
Therefore, there
exists an open set of $A$ in the space of
$6 \times 6$ Hermitian matrices that correspond to
degree-three rational maps of spheres.
For the map to be equivalent to a polynomial map
taking the origin to origin, we must set $5$ complex parameters in $A$
to zero.  That is, for $A$ to correspond to a polynomial map preserving
the origin, then it must be of the form $A_1 \oplus 1$ where $A_1$ is
a $5 \times 5$ Hermitian matrix.
The
set of linear maps of the source preserving $J$ has complex dimension 3.
Hence, since an open set of the $d-1$ jets is possible, then we will not
always be able to set all $5$ parameters to zero.  In fact there will be a whole
family of examples.  In nonhomogeneous coordinates we see that there must be
a real two-dimensional family of examples of maps not spherically equivalent to
a polynomial map taking the origin to origin.
\end{proof}


\section{Monomial maps of hyperquadrics} \label{section:monomial}

Let us study the monomial version of the problem of classifying monomial
maps between hyperquadrics in dimensions 2 and 3.
The proofs in this simplified case
illustrate the combinatorics of the more general problem.  Furthermore, we
also use the monomial classification for the general case.
It appears that the combinatorics governing the monomial situation govern the
general situation of CR maps of hyperquadrics in some sense.
For example, even allowing for
higher source and target dimension, there is no CR map of spheres known to the
author that is not homotopic to a monomial example.  We have also shown
in this
paper that all degree-two maps of spheres are equivalent to monomial maps.

The setup translates into a simpler problem in real-algebraic
geometry as it did for monomial CR map of spheres.
On the other hand, due to a different equivalence relation, we get a somewhat
more complicated statement.
Here, $Q(1,1)$ is no longer equivalent
to $Q(2,0)$, and we furthermore have a perfectly valid interpretation of
$Q(0,2)$.

See \cites{DKR, DLP} for more on the following setup.
Let 
$f \colon Q(a,b) \to Q(c,d)$
be a monomial CR map.
We 
have that $\abs{f_1}^2 + \cdots + \abs{f_c}^2 -
\abs{f_{c+1}}^2 - \cdots - \abs{f_{c+d}}^2 = 1$ when
$\abs{z}^2 + \epsilon \abs{w}^2 = 1$, where $\epsilon = 1$ if
the source is $Q(2,0)$ and $-1$ when the source is $Q(1,1)$.  As every
$f_j$ is a monomial, we can replace $x = \abs{z}^2$ and $y = \abs{w}^2$.

We can therefore formulate the following real-algebraic problem.  Let
$p(x,y)$ be a real polynomial that has $N = N(p) = N_+(p) + N_-(p)$
distinct monomials,
$N_+$ monomials with a positive coefficient and
$N_-$ monomials with a negative coefficient.  We will call
the tuple $(N_+(p),N_-(p))$ the \emph{signature} of $p$.
We will consider two polynomials $p$ and $q$ equivalent if either
$p(x,y) = q(x,y)$ or $p(x,y) = q(y,x)$.
Then we have the following classification.  Note that
in the real-algebraic problem signature $(0,k)$ makes sense.

We will state the general version in homogeneous coordinates.  First, let us 
give the specific result for spheres in nonhomogeneous coordinates.

\begin{prop}
Let $p(x,y)$ be a real polynomial such that $p(x,y) = 1$
whenever $x+y=1$.  Then the possible polynomials (up to swapping of
variables) with signature $(3,0)$ are
\begin{enumerate}[(i)]
\item $x^3 + 3xy + y^3$,
\item $x^2 + 2xy + y^2$,
\item $x + xy + y^2$,
\item $\alpha x + \alpha y + (1-\alpha)$ for some $\alpha \in (0,1)$.
\end{enumerate}
For signature $(2,1)$ we have
\begin{enumerate}[(i)]
\item $x^2 + 2y - y^2$,
\item $\alpha x + \alpha y - (\alpha-1)$ for some $\alpha > 1$.
\end{enumerate}
For signature $(1,2)$ we have
\begin{enumerate}[(i)]
\item $(1+\alpha) - \alpha x - \alpha y$ for some $\alpha > 0$.
\end{enumerate}
No such polynomial with signature $(0,3)$ exists.
\end{prop}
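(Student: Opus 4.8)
The plan is to substitute $y=1-x$, so that the hypothesis becomes the one-variable identity $p(x,1-x)\equiv1$, and then to argue by the number of monomials of $p$ together with their signs. Two elementary facts will carry the argument. First, any polynomial with at most two distinct monomials that equals $1$ on $\{x+y=1\}$ is either the constant $1$ or $x+y$: a monomial $x^ay^b$ is nonzero at $(0,1)$ precisely when $a=0$ and at $(1,0)$ precisely when $b=0$, so evaluating there shows such a polynomial contains a power of $y$ and a power of $x$; if these coincide it is the constant, and otherwise the two evaluations force both coefficients to equal $1$ and a short expansion forces both exponents to equal $1$. Second, a nonzero polynomial divisible by $x+y-1$ has at least three distinct monomials: its top-degree part is $x+y$ times the top part of the quotient, hence has at least two monomials, while the remaining terms contribute at least one more (the constant term if the quotient has nonzero constant term, and otherwise the lowest-degree part of the quotient). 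I expect the only laborious point to be a finite coefficient comparison in one subcase below; the rest is quick.

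If $p$ has a nonzero constant term $c_0$, then $p-1$ is a nonzero polynomial divisible by $x+y-1$; were $c_0=1$ it would have only the two non-constant monomials of $p$, contradicting the second fact, so $c_0\neq1$, and $(p-c_0)/(1-c_0)$ is $1$ on the line with exactly two monomials, hence equals $x+y$. Thus $p=\alpha(x+y)+(1-\alpha)$ with $\alpha=1-c_0\notin\{0,1\}$, whose coefficients $\alpha,\alpha,1-\alpha$ give signature $(3,0)$ for $\alpha\in(0,1)$, signature $(2,1)$ for $\alpha>1$, and signature $(1,2)$ for $\alpha<0$ (set $\alpha=-\beta$). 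These are precisely items (iv), (ii) and (i) of the three respective lists.

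Now assume $p(0,0)=0$, so $p$ is homogeneous of some degree $d\geq1$ plus strictly lower-order terms. Homogenizing to degree $d$ with a new variable $t$ turns the hypothesis into $\widetilde p-t^d=(x+y-t)Q$ with $Q$ homogeneous of degree $d-1$; setting $x=y=0$ gives $Q(0,0,t)=t^{d-1}$, so $Q\neq0$, and setting $t=0$ gives $p_d=(x+y)\,Q(x,y,0)$, where $p_d\neq0$ is the top-degree part of $p$. Hence $p_d$ is a nonzero multiple of $x+y$ and has at least two monomials, and since $p$ has exactly three monomials there are two possibilities. If $p=p_d$ is homogeneous, then rescaling forces $p=(x+y)^d$, so $d=2$ and $p=x^2+2xy+y^2$, item (ii) of the $(3,0)$ list. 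Otherwise $p=p_d+c_0x^ay^b$ with $c_0\neq0$, $a+b<d$, and $p_d$ exactly two monomials; divisibility by $x+y$ then pins $p_d$ down to $A\bigl(x^ry^{d-r}-(-1)^{r'-r}x^{r'}y^{d-r'}\bigr)$ with $A\neq0$ and $0\le r<r'\le d$, and substituting into $p(x,1-x)\equiv1$ and matching the constant term, the (automatically vanishing) coefficient of $x^d$, and the lowest few surviving powers of $x$ is a finite check over the small remaining set of exponent patterns. It leaves only $x+xy+y^2$ ($d=2$) and $x^3+3xy+y^3$ ($d=3$), both with all positive coefficients — items (iii) and (i) of the $(3,0)$ list — together with $x^2+2y-y^2$ ($d=2$), of signature $(2,1)$ (item (i)); no solution of signature $(1,2)$ or $(0,3)$ arises here.

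Finally, signature $(0,3)$ is impossible: a nonzero polynomial all of whose coefficients are $\le0$ is strictly negative at $(\tfrac12,\tfrac12)$, where every monomial is positive, so it cannot be $1$ on $\{x+y=1\}$. Assembling the polynomials found in the constant-term case, the homogeneous case, and the "$p_d$ plus one monomial" case, sorted by signature, gives exactly the four displayed lists. The main obstacle is the elementary but slightly tedious coefficient comparison in the last of those cases.
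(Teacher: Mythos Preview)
Your approach is correct and genuinely different from the paper's. The paper does not argue directly in the two variables $x,y$; instead it homogenizes, replacing the condition ``$p(x,y)=1$ on $x+y=1$'' by ``$\widetilde p(x,y,t)-t^d$ is divisible by $x+y+t$'' (after $t\mapsto -t$), and then proves a more symmetric lemma classifying all homogeneous three-variable polynomials with at most four monomials divisible by $x+y+t$. The key device there is the observation that if $\varphi(x,y)-\alpha t^d$ vanishes on $x+y+t=0$ then $\varphi=\alpha(-1)^d(x+y)^d$, which is applied after setting one variable to zero; the three-variable symmetry (permutations of $x,y,t$) then cuts the casework substantially. Your route --- split off the constant-term case by reducing to two-term polynomials, then the homogeneous case, then ``$p_d$ plus one lower monomial'' --- is more direct and avoids introducing the third variable, at the cost of losing that symmetry.

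One point deserves more care than you give it. In the final subcase you assert that the coefficient comparison is ``a finite check over the small remaining set of exponent patterns,'' but you have not yet bounded the degree $d$, so a priori the set is infinite. The bound comes out once you also match the coefficient of $x^{d-1}$ (which forces $a+b=d-1$, since the two top-degree terms contribute $A(-1)^{d-1-r}(r'-r)\neq 0$ there) and then the coefficients of $x^0$, $x^1$, $x^2$; working through the four cases determined by which of $r=0$, $r'=d$, $a=0$, $b=0$ hold, you find $d\le 3$ and recover exactly $x+xy+y^2$, $x^2+2y-y^2$, and $x^3+3xy+y^3$. This is straightforward but not quite as short as your sketch suggests, and the step ``$a+b=d-1$'' from the $x^{d-1}$ coefficient is the one that makes the check genuinely finite. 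The paper's three-variable symmetry buys a cleaner way to force these degree bounds; your approach trades that for a more elementary but slightly longer endgame.
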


Instead of directly proving this particular case, we give a more general
statement.  Suppose that we have a real homogeneous polynomial $p(x,y,t)$,
such that $p=0$ on $x+y+t = 0$.  If we classify all such polynomials with 4
(or less) terms, then we also prove the proposition.  We take
$p(x,y,-t)$, set $t=1$, and only take those polynomials with a constant term
equal to one.

In this way we also easily obtain all monomial maps from $Q(1,1)$ to 
other hyperquadrics.
Therefore, we need only to prove the following lemma.

\begin{lemma} \label{monhqclass}
Let $p(x,y,t)$ be a nonzero
homogeneous polynomial with four or less distinct
monomials such that $p(x,y,t) = 0$ on $x+y+t = 0$.  Then, up to permutation of
variables, we have $p = qm$, where $m$ is an arbitrary monomial and $q$ is one
of the following polynomials
\begin{enumerate}[(i)]
\item $x^3+y^3+t^3-3xyt$,
\item $x^2+y^2+2xy-t^2$,
\item $x^2+xy-yt-t^2$,
\item $x+y+t$.
\end{enumerate}
\end{lemma}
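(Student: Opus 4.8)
The plan is to classify homogeneous polynomials $p(x,y,t)$ in three variables with at most four terms that vanish on the hyperplane $x+y+t=0$. The key observation is that vanishing on $x+y+t=0$ is equivalent to $(x+y+t)$ dividing $p$ in $\C[x,y,t]$ (since $x+y+t$ is irreducible), so write $p = (x+y+t)\,h$ for some homogeneous $h$ of degree $\deg p - 1$. The constraint ``$p$ has at most four monomials'' then becomes a strong restriction on $h$: multiplying $h$ by $(x+y+t)$ typically triples the monomial count unless massive cancellation occurs, so $h$ can have very few monomials itself, and its monomials must be arranged so that the products collapse.

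First I would reduce to the case that $p$ has no common monomial factor: if $p=qm$ with $m$ a monomial, then $q$ again vanishes on $x+y+t=0$ and has the same number of terms, so it suffices to classify the primitive $p$ (those not divisible by $x$, $y$, or $t$), and these should be exactly the four polynomials listed. Next I would organize by the number of terms $N(p) \in \{2,3,4\}$ (one term is impossible since a monomial is not divisible by $x+y+t$). For $N(p)=2$: write $p$ as a binomial divisible by $x+y+t$; a binomial $c_1 x^\alpha + c_2 x^\beta$ divisible by an irreducible degree-one form forces that form to be (a scalar times) $x_i - x_j$ type after factoring, which cannot equal $x+y+t$ up to scalar, so no primitive example — but I should double check whether $x+y+t$ itself (three terms, $h=1$) is the unique small case, which it is and appears as item (iv). For $N(p)=3$ and $N(p)=4$, I would enumerate the possible monomial supports of $h$ (at most two, respectively at most... actually up to four monomials in $h$) and impose that $(x+y+t)h$ has the claimed term count, using symmetry in $x,y,t$ to cut down cases.

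The cleanest route for $N(p)=3$ and $4$ is probably substitution: set $t = -(x+y)$ and expand $p(x,y,-(x+y)) \equiv 0$ as a polynomial identity in $x,y$; matching coefficients gives linear relations among the (at most four) coefficients of $p$, and combining these with the requirement that the monomial exponents are consistent pins down both the exponent tuples and the coefficient ratios. I expect the main obstacle to be the bookkeeping in the four-term case: one must rule out all ``accidental'' four-term polynomials divisible by $x+y+t$ other than items (i), (ii), (iii), which requires checking several exponent patterns (e.g. degree-two supports giving item (ii) or (iii), degree-three supports giving item (i)) and showing higher-degree primitive examples with only four terms cannot exist because $(x+y+t)h$ with $h$ having $k$ terms has at least some minimum number of surviving monomials once $h$ is primitive of positive degree. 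Handling this counting lower bound carefully — showing cancellation in $(x+y+t)h$ cannot reduce below four terms except in the listed configurations — is the crux, and I would do it by a short argument on the Newton polygon: the extreme monomials of $(x+y+t)h$ (vertices of the Minkowski sum of the triangle and the Newton polygon of $h$) never cancel, so the support of $p$ contains all vertices of that Minkowski sum, which forces $h$'s Newton polygon to be a point or a very small segment, leaving only finitely many cases to inspect by hand.
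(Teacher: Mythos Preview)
Your reduction to the primitive case and the observation that $p=(x+y+t)h$ are fine, and the substitution $t=-(x+y)$ is indeed one of the paper's tools. But the Newton polygon argument at the end has a real gap. You claim that bounding the number of vertices of the Minkowski sum forces the Newton polygon of $h$ to be ``a point or a very small segment.'' This is false, and in fact two of the four target polynomials already witness the failure: for item (ii) one has $h=x+y-t$, and for item (i) one has $h=x^2+y^2+t^2-xy-yt-xt$; in both cases the Newton polygon of $h$ is the \emph{full} standard triangle, not a point or segment. The reason your vertex count does not exclude this is that the Minkowski sum of two homothetic triangles is again a triangle with only three vertices, so a four-term bound on $p$ places no vertex obstruction on $h$ having a two-dimensional Newton polygon of any size. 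Consequently the degree of $h$ is not bounded by your argument, and you do not arrive at ``finitely many cases to inspect by hand.''

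The paper's route is genuinely different and supplies the missing degree bound. The key device is Proposition~\ref{homomonprop}: if a homogeneous polynomial of the shape $\varphi(x,y)-\alpha t^d$ vanishes on $x+y+t=0$, then $\varphi(x,y)=\alpha(-1)^d(x+y)^d$. The proof first sets one variable to zero (e.g.\ $t=0$) to see that $p(x,y,0)$, being a binomial divisible by $x+y$, must have the form $m(x,y)(x^k\pm y^k)$; doing this for each variable pins down the monomial support. Then one substitutes $t=-(x+y)$ (and cyclically) and applies Proposition~\ref{homomonprop} to force the relevant exponents to be $d-1$ or $d-2$, which bounds $d\le 3$ and leaves a short finite check. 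Your substitution idea is compatible with this, but without the analogue of Proposition~\ref{homomonprop} (or some other explicit degree bound), the Newton polygon heuristic alone does not close the argument.
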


Before we get to the proof, let us prove the following proposition.  It is
a modification of the classification of homogeneous monomial CR map of
spheres.
See \cites{DKR,DLP} for the sphere version and a slightly different proof.

\begin{prop} \label{homomonprop}
Let $p(x,y,t) = \varphi(x,y) - \alpha t^d$ for some real number $\alpha$
be a homogeneous polynomial
such that $p=0$ when $x+y+t=0$.  Then $\varphi(x,y) = \alpha (-1)^d(x+y)^d$.
\end{prop}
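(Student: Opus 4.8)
The plan is to eliminate the variable $t$ by restricting $p$ to the plane $x+y+t=0$, using the parametrization $(x,y)\mapsto\bigl(x,y,-(x+y)\bigr)$. First I would note that the hypothesis ``$p=0$ when $x+y+t=0$'' means precisely that the two-variable polynomial $p\bigl(x,y,-(x+y)\bigr)$ vanishes at every real point $(x,y)$, and therefore is the zero polynomial (a polynomial vanishing on all real points of an affine hyperplane vanishes identically on it); equivalently, $x+y+t$ divides $p$ in the polynomial ring. Either phrasing is enough for what follows.

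Next I would simply substitute $t=-(x+y)$ into the given expression $p(x,y,t)=\varphi(x,y)-\alpha t^d$. Since $p\bigl(x,y,-(x+y)\bigr)$ is the zero polynomial, this yields the identity
\begin{equation}
0 = p\bigl(x,y,-(x+y)\bigr) = \varphi(x,y) - \alpha\bigl(-(x+y)\bigr)^d = \varphi(x,y) - \alpha(-1)^d(x+y)^d
\end{equation}
in the variables $x,y$. Rearranging gives $\varphi(x,y)=\alpha(-1)^d(x+y)^d$, which is exactly the asserted formula.

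There is essentially no obstacle in this argument; the only point worth a sentence is the standard fact invoked above, that vanishing on all real points of the hyperplane forces a genuine polynomial identity after the substitution (rather than merely a pointwise equality). Note also that the homogeneity of $p$ is not actually needed for the computation, although when $\alpha\neq 0$ it forces $\varphi$ to be homogeneous of degree $d$, consistently with the conclusion; and when $\alpha=0$ the identity reduces to $\varphi\equiv 0$, which the same substitution immediately confirms.
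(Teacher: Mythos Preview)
Your proposal is correct and takes essentially the same approach as the paper: both substitute $t=-(x+y)$ on the hyperplane and observe that the resulting polynomial $\varphi(x,y)-\alpha(-1)^d(x+y)^d$, being independent of $t$ yet vanishing on $x+y+t=0$, must be identically zero. The paper's proof is terser, but the underlying idea is identical.
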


\begin{proof}
Note that $-(x+y)=t$ on $x+y+t=0$.  Hence write
\begin{equation}
\varphi(x,y) - \alpha (-1)^d(x+y)^d .
\end{equation}
This is a polynomial that is zero on $x+y+t=0$, but does not depend on $t$
and hence must be identically zero.
\end{proof}

\begin{proof}[Proof of Lemma~\ref{monhqclass}]
By dividing through by $m$ we can assume that the monomials in $p$ have
no common factor.
Next we notice that $p(x,y,t)$ is divisible by $x+y+t$.  First suppose that
$p(x,y,t)$ has 3 or fewer monomials.  For each variable,
there must be a monomial containing it by our assumption.  Hence
for example $p(x,y,0)$ has at most 2 monomials, but also cannot be
identically zero by assumption.  Since it is divisible by $x+y$
we can see that $p(x,y,0) = m(x,y)(x^k\pm y^k)$, for some monomial $m(x,y)$
depending on $x$ and $y$ only.  This fact is easy to see,
and can be proved in similar fashion as Proposition~\ref{homomonprop}.
Same reasoning applies for the other variables.  As there can be at most $3$
monomials
then $p$ must be a constant multiple of $x^k+y^k+t^k$.  Now by 
application of Proposition~\ref{homomonprop} $k=1$.

Therefore we can assume that there are exactly 4 distinct monomials.
It is elementary to see that at least two variables divide
two or more terms in $p$.  Without loss of generality, suppose these are
the variables $x$ and $y$.  By the same logic as above,
we have $p(x,0,t) = m_1(x,t)(x^k\pm t^k)$ and
$p(0,y,t) = m_2(y,t)(y^\ell\pm t^\ell)$ for
some monomials $m_1$ and $m_2$.  As $t$ cannot divide
all terms, and as we have exactly 4 terms, we conclude, without loss
of generality, that $m_1 = \mu x^{d-k}$.  We can scale $p$ to have $\mu = 1$.
We have two possibilities.  The polynomial $p$ is either
\begin{equation}
x^d \pm x^{d-k} t^k + \alpha y^{a+\ell} t^b \pm \alpha y^a t^{b+\ell}
\qquad
\text{or}
\qquad
x^d \pm t^d \pm y^\ell t^{d-\ell} + \alpha x^ay^bt^c .
\end{equation}
To discount the first possibility note that $k \geq 1$ and $\ell \geq 1$.
If we let $t=0$ we must get at
least two terms as before, so $b=0$.  We also note that this must mean
that $\alpha = \pm 1$ (again by setting $t=0$).  We are left with
essentially a special case of the second possibility, so let us focus on
that.

First suppose that $\ell = d$.  Then we have
$x^d \pm t^d \pm y^d + \alpha x^ay^bt^c$.  If $c = 0$, then we apply
Proposition~\ref{homomonprop} to conclude that $d=2$ and the polynomial
is $x^2 - t^2 + y^2 + 2 xy$.  Similar with $a$ and $b$.

Now let us replace $t$ in the 
last term with $(-x-y)$.  We get
$x^d \pm t^d \pm y^d + \alpha x^ay^b(-x-y)^c$.  This polynomial fulfills the
hypothesis of Proposition~\ref{homomonprop}.  Hence 
$x^d \pm y^d + \alpha x^ay^b(-x-y)^c$ must have the same number of terms as
$(x+y)^d$.  We conclude that $c=d-2$.  Similarly, we repeat the argument for
$x$ and $y$ and conclude that $a=d-2$ and $b=d-2$.  Hence $d=3$.  By dividing
by $x+y+t$ we see that the polynomial must be
$x^3+t^3+y^3-3xyt$.

So suppose that $\ell < d$.  Then after setting $t =0$ we must have two
terms and so $c=0$.  By setting $t=0$ we also see that $\alpha=\pm1$.  So
we have
$x^d \pm t^d \pm y^\ell t^{d-\ell} \pm x^ay^b$.
We replace $x$ with $(-y-t)$ and apply Proposition~\ref{homomonprop}.
After counting monomials we find that $a=d-2$ or $a=d-1$.
Letting $t=(-x-y)$ and doing the same as above we find that $d-\ell = d-2$
or $d-\ell = d-1$.  Hence $y$ appears to at most second power.
If we substitute $y=-x-t$ we must get an identically zero polynomial.
By counting monomials we get a finite list of possibilities and the only one
that is divisible by $x+y+t$ is 
$x^2-t^2+yt-xy$.
\end{proof}

We can now obtain monomial CR maps of hyperquadrics by possibly permuting
variables and substituting
$\pm\abs{z}^2$ for $x$, $\pm\abs{w}^2$ for $y$ and $\pm 1$ for $t$.
We can also obtain CR maps of hyperquadrics by substituting
$\pm z\bar{w}$ for $x$, $\pm w\bar{z}$ for $y$ and $\pm 1$ for $t$ and
then changing variables to put the source hyperquadric
into the standard form.  We can follow this procedure 
to find the degree-3 CR map of $Q(2,0)$ to $Q(2,1)$.  Note that we
will never get a CR map of spheres by this nonstandard substitution.


\section{Degree 2 maps from \texorpdfstring{$S^3$ to $Q(2,1)$}{S\textthreesuperior\ to Q(2,1)}} \label{section:deg2mapshq23}

Let
$f \colon S^3 = Q(2,0) \to Q(2,1)$ be
a degree-two rational
map.  That is, let $J =
\left[
\begin{smallmatrix}
1 & 0 & 0 \\
0 & 1 & 0 \\
0 & 0 & -1
\end{smallmatrix}
\right] = \operatorname{diag}(1,1,-1)$, and let $A$ be a Hermitian $3 \times
3$ matrix such that
in homogeneous coordinates (using the notation above)
\begin{equation}
\langle V f(z) , f(z) \rangle =
\langle F^* V F \sZ , \sZ \rangle =
\langle A z, z \rangle 
\langle J z, z \rangle ,
\end{equation}
where $V = \operatorname{diag}(1,1,-1,-1)$.  I.e.\ the $6 \times 6$
matrix $B = F^* V F$ is rank 4 or less.
As the sphere does not contain any
complex varieties it is easy to see that the rank cannot be less than 3.
If $B$ is rank 3, then the map would be CR map of
spheres in the same dimension and hence Q-equivalent to the identity map,
which is not degree-two.  Thus assume that $B$ is of rank 4,
with 2 positive and 2 negative eigenvalues.

As explained above we 
use the automorphism group of the ball to put $A$ into a canonical form.
We have already computed a list of all possible canonical forms and the
resulting matrices $B$.  All we have to do is find those canonical
forms for which $B$ is rank 4 and has 2 positive and 2 negative eigenvalues.

If we can diagonalize the pair $(J,A)$
simultaneously by $*$-congruence, the map is equivalent to a monomial map,
that is, monomial in homogeneous coordinates.
As mentioned before, we know we have a monomial map from
some hyperquadric equivalent to the ball.  Hence by the classification
of monomial CR maps of hyperquadrics, we are essentially done.
Applying
Lemma~\ref{monhqclass}
we find the following list of monomial CR maps of hyperquadrics
in homogeneous coordinates $(z,w,t)$
\begin{align}
& (z,w,t) \mapsto (z^2,\sqrt{2}\,wt,w^2,t^2), \label{firsthqmonmap} \\
& (z,w,t) \mapsto (tz, w^2, wt, z^2). \label{secondhqmonmap}
\end{align}
The
matrix $A$ for these two maps is different even after permutation or
negation (we need to handle negation,
since $V$ has two positive and two negative eigenvalues). 
Consequently, the two maps \eqref{firsthqmonmap} and
\eqref{secondhqmonmap} are not Q-equivalent.

We focus on the case where we cannot diagonalize.  Let us again 
assume that there is some block in the canonical form greater than $1 \times
1$.
We will consider all the computed canonical forms for $B$ for
all the matrix pairs $(J,A)$ from \S~\ref{section:deg2maps}.

Let us take the situation corresponding to
the block $\Delta_2$ in $J$.  The canonical pair for $(J,A)$ is
then
$\left(
\left[
\begin{smallmatrix}
       1 &        0 &        0\\
       0 &        0 &        1\\
       0 &        1 &        0
\end{smallmatrix}
\right]
,
\left[
\begin{smallmatrix}
  \alpha &        0 &        0\\
       0 &        0 &    \beta\\
       0 &    \beta &        1
\end{smallmatrix}
\right]
\right)$.
The corresponding $B$ is computed in \eqref{form1B}.
This matrix can never have rank 4.  It can have rank 6, 5, or 3.  Therefore
we do not have to consider this case.
Similar calculations show the same result for the related canonical
form
$\left(
\left[
\begin{smallmatrix}
       1 &        0 &        0\\
       0 &        0 &        -1\\
       0 &        -1 &        0
\end{smallmatrix}
\right]
,
\left[
\begin{smallmatrix}
  \alpha &        0 &        0\\
       0 &        0 &    \beta\\
       0 &    \beta &        -1
\end{smallmatrix}
\right]
\right)$.

Next let us consider the pair
$\left(
\left[
\begin{smallmatrix}
       0 &        0 &        1\\
       0 &        1 &        0\\
       1 &        0 &        0
\end{smallmatrix}
\right]
,
\left[
\begin{smallmatrix}
       0 &        0 &   \alpha\\
       0 &   \alpha &        1\\
  \alpha &        1 &        0
\end{smallmatrix}
\right]
\right)$.
The corresponding $B$ is computed in \eqref{form2B}.
This matrix is rank 4 if and only if $\alpha=0$.  The matrix has
two positive and two negative eigenvalues, and so we do get a map to the
$Q(2,1)$ hyperquadric.

We follow the procedure to get a map as follows. First change variables to
put $J$ into the form 
$
\left[
\begin{smallmatrix}
       1 &        0 &        0\\
       0 &        1 &        0\\
       0 &        0 &        -1
\end{smallmatrix}
\right]$.  We then find a set of orthonormal eigenvectors and follow the
procedure outlined before to get the map in homogeneous coordinates
\begin{multline}
(z,w,t) \mapsto (
z^2+\sqrt{3}\,zw+w^2-zt ,
w^2+zt-\sqrt{3}\,wt - t^2 , \\
z^2-\sqrt{3}\,zw+w^2-zt,
w^2+zt+\sqrt{3}\,wt - t^2) .
\end{multline}
Note that we have scaled the eigenvectors to avoid ugly expressions.

Finally we get to the canonical pair
$\left(
\left[
\begin{smallmatrix}
       1 &        0 &        0\\
       0 &        0 &        1\\
       0 &        1 &        0
\end{smallmatrix}
\right]
,
\left[
\begin{smallmatrix}
  \alpha &        0 &        0\\
       0 &        0 & \beta-i\gamma\\
       0 & \beta+i\gamma &        0
\end{smallmatrix}
\right]
\right)$,
for $\gamma > 0$.
The corresponding $B$ is computed in \eqref{form3B}.
The only way for the matrix $B$ to have rank 4 is to set $\alpha = \beta =
0$.  We can rescale to have $\gamma=1$.  We follow the same procedure as
before, we change variables and find a set of orthonormal eigenvectors
as before to obtain the map
\begin{equation}
(z,w,t) \mapsto
(\sqrt[4]{2}(zw-izt),
w^2-\sqrt{2}\,iwt+t^2,
\sqrt[4]{2}(zw+izt),
w^2+\sqrt{2}\,iwt+t^2).
\end{equation}

We are done.  We have proved the following lemma.

\begin{lemma} \label{lemma:deg2hq23}
Let $f \colon Q(2,0) \mapsto Q(2,1)$ be a rational degree-two CR map.
Then
$f$ is Q-equivalent to exactly one of the maps
\begin{enumerate}[(i)]
\item
$(z,w) \mapsto (z^2,\sqrt{2}\,w,w^2)$,
\item
$(z,w) \mapsto \left( \frac{1}{z}, \frac{w^2}{z^2}, \frac{w}{z^2} \right)$,
\item
$(z,w) \mapsto \left(
\frac{z^2+\sqrt{3}\,zw+w^2-z}{w^2+z+\sqrt{3}\,w - 1} ,
\frac{w^2+z-\sqrt{3}\,w - 1}{w^2+z+\sqrt{3}\,w - 1} ,
\frac{z^2-\sqrt{3}\,zw+w^2-z}{w^2+z+\sqrt{3}\,w - 1}
\right)$,
\item
$(z,w) \mapsto \left(
\frac{\sqrt[4]{2}(zw-iz)}{w^2+\sqrt{2}\,iw+1} ,
\frac{w^2-\sqrt{2}\,iw+1}{w^2+\sqrt{2}\,iw+1} ,
\frac{\sqrt[4]{2}(zw+iz)}{w^2+\sqrt{2}\,iw+1}
\right)$.
\end{enumerate}
\end{lemma}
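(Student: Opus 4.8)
The plan is to carry out the reduction of \S~\ref{section:deg2maps} to the end in the special case $(a,b)=(2,0)$, $(c,d)=(2,1)$, and then dehomogenize. Write $f=F\sZ$ with $\sZ$ the degree-two Veronese map; mapping $Q(2,0)$ into $Q(2,1)$ means $\langle F^*VF\sZ,\sZ\rangle=\langle Az,z\rangle\langle Jz,z\rangle$ for $V=\operatorname{diag}(1,1,-1,-1)$, $J=\operatorname{diag}(1,1,-1)$, and some Hermitian $A$. Put $B=F^*VF$. Since $S^3$ carries no positive-dimensional complex variety, $\rank B\geq 3$; and a rank-$3$ $B$ would make $f$ a CR map of $S^3$ into $S^3$, hence the identity by Pin\v{c}uk~\cite{Pincuk}, which is not of degree two. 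So $\rank B=4$, and matching the inertia of $V$ forces $B$ to have signature $(2,2)$. The whole problem is therefore to list, modulo $A\mapsto X^*AX$ with $X^*JX=J$, all Hermitian $A$ whose associated $B$ is rank $4$ of signature $(2,2)$. (The inertia of $J$ is $(2,1)$, not balanced, so we need not also allow $X^*JX=-J$.)

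Next I would invoke the Horn--Sergeichuk classification of the pair $(J,A)$. Because $J$ is nonsingular with a single negative eigenvalue and every canonical block of size $\geq 2$ contributes at least one negative eigenvalue, at most one block is larger than $1\times1$; among the blocks with no zero eigenvalue and at most one negative eigenvalue the possibilities are $M_2$, $\pm\Delta_2(1,0)$, $\Delta_3(1,0)$, and $\left[\begin{smallmatrix}0 & I_1\\ I_1 & 0\end{smallmatrix}\right]$, all of size $2$ or $3$. Hence $J$ and $A$ may be taken $3\times3$. If the large block is $1\times1$ --- i.e.\ $(J,A)$ is simultaneously $*$-diagonalizable --- then the homogeneous map is monomial; together with the observation of \S~\ref{section:deg2maps} that $f$ is then a monomial map (no negative exponents) from a hyperquadric equivalent to the ball, Lemma~\ref{monhqclass} (via Proposition~\ref{homomonprop}) produces exactly the two monomial maps \eqref{firsthqmonmap} and \eqref{secondhqmonmap}; dividing by the appropriate negative component and setting $t=1$ gives maps (i) and (ii), whose matrices $A$ are distinct even up to permuting and negating variables, so (i) and (ii) are Q-inequivalent.

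For the non-diagonalizable cases I would use the three matrices $B$ already computed in \eqref{form1B}, \eqref{form2B}, \eqref{form3B} (noting that $M_2=\left[\begin{smallmatrix}0 & I_1\\ I_1 & 0\end{smallmatrix}\right]$, so that case is absorbed, and that $iN_2$ is the block with $J_1(\alpha+i\beta)$ at $\alpha+i\beta=i$, so letting $\gamma\geq 0$ in \eqref{form3B} covers it). For \eqref{form1B} a rank computation, exploiting its block structure, shows $B$ is always of rank $6$, $5$, or $3$, never $4$ --- likewise for the $-\Delta_2$ variant --- so these are discarded. For \eqref{form2B}, $B$ acquires a zero eigenvalue, hence $\rank B\leq 4$, precisely when $\alpha=0$, and then its inertia is $(2,2)$; writing this $B$ as a sum of rank-one Hermitian pieces after changing variables so that $J=\operatorname{diag}(1,1,-1)$ and dehomogenizing ($t=1$) yields map (iii). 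For \eqref{form3B} with $\gamma>0$, the rank drops to $4$ exactly at $\alpha=\beta=0$; rescaling $\gamma=1$ and running the same recipe gives map (iv). Uniqueness then follows because the Horn--Sergeichuk normal form of $(J,A)$ is unique up to permutation of blocks and the four surviving cases are genuinely different pairs --- here one must check that negating the form defining $Q(2,1)$ (permitted since $V$ has balanced inertia $(2,2)$, the $c=d+1$ clause of the Corollary to Lemma~\ref{formprop}) identifies none of them --- so the four maps are pairwise Q-inequivalent.

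The one genuinely load-bearing step is the linear algebra in the previous paragraph: determining, for each of the $6\times6$ matrices $B$ in \eqref{form1B}--\eqref{form3B}, the exact parameter locus on which $\rank B=4$ and confirming the inertia is $(2,2)$ there. The block structure of these $B$'s (each splits into small pieces along the antidiagonal pattern of the monomial ordering) keeps this finite and explicit, but it is the crux. A secondary bit of care is needed in passing from the reduced $B$ to the displayed rational maps: one must precompose with the linear change of variables sending the canonical $J$ to $\operatorname{diag}(1,1,-1)$ before reading off the nonhomogeneous components, and rescaling the chosen orthonormal eigenvectors by a common factor (as in \S~\ref{section:deg2maps}) clears the radicals and yields the clean formulas in (iii) and (iv).
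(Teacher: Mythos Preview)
Your proposal is correct and follows essentially the same route as the paper's own argument in \S\ref{section:deg2mapshq23}: the same rank-$4$/signature-$(2,2)$ reduction on $B$, the same split into the diagonalizable case (handled via Lemma~\ref{monhqclass} to produce maps (i) and (ii)) versus the three non-diagonalizable canonical pairs, and the same parameter-by-parameter rank analysis of \eqref{form1B}--\eqref{form3B} yielding precisely $\alpha=0$ in \eqref{form2B} for map (iii) and $\alpha=\beta=0$ in \eqref{form3B} for map (iv). Your added remark that the balanced inertia of $V$ requires checking negation for the inequivalence step is exactly what the paper notes when comparing \eqref{firsthqmonmap} and \eqref{secondhqmonmap}.
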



\section{Proof of Theorem \ref{thm:hqclass}} \label{section:proofofhqclass}

To prove the classification of CR maps of hyperquadrics, we will have to
revisit some of Faran's~\cite{Faran:B2B3} setup.

Suppose that a holomorphic function $f \colon U \subset \C^2 \to \C^3$
is such that an image of any complex line is contained in some plane.
We then call $f$ a \emph{planar} map.
If $f$ is a planar map defined on a domain $U \subset \bP^2$,
we can define the \emph{dual} map.
First for a domain $U$ define the dual domain $U^*$
where $U^*$ is composed of lines $L \subset \bP^2$ such that
$f(L \cap U)$ is contained in a unique plane in $\bP^3$.
Let $\bP^{2*}$ and $\bP^{3*}$ be the dual spaces, that is
the spaces of hyperplanes in $\bP^2$ and $\bP^3$ respectively.  We
can then define
$f^* \colon U^* \subset \bP^{2*} \to \bP^{3*}$ so that if $f$
takes a line $L$ into the unique plane $P$, then $f^*(L) = P$.
See~\cite{Faran:B2B3} for basic properties of duals.  For example,
the dual of a dual is the map itself.
Let $f$ and $g$ be two maps of $\bP^2$ to $\bP^3$.
If there exist conjugate-linear
isomorphisms
$\psi$ and $\varphi$ such that $\psi f = g \varphi$, then
$f$ and $g$ are said to be conjugate isomorphic.  If $f$ and
$f^*$ are conjugate isomorphic, then $f$ is said to be self-dual.

The following lemma gives a useful property of CR maps of hyperquadrics.
Faran proved the lemma for CR maps of sphere.
The proof for hyperquadrics is
almost exactly the same and we restate it here for convenience.

\begin{lemma}[Faran~\cite{Faran:B2B3}] \label{lemma:selfdual}
If $f \colon U \subset Q(2,0) \to Q(2,1)$,
is a nonconstant real-analytic CR map, then $f$ is planar and self-dual.
\end{lemma}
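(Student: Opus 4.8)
The plan is to adapt Faran's Segre-variety argument to the hyperquadric target. First I would record that a nonconstant real-analytic CR map of $Q(2,0)$ extends holomorphically across the hypersurface, and (by Forstneri\v{c}'s theorem~\cite{Forstneric}) to a rational map $\bP^2 \dashrightarrow \bP^3$, so that we may work with the homogeneous picture of \S\ref{section:hermforms}: the condition that $f$ map $Q(2,0)$ into $Q(2,1)$ becomes the identity $\langle V f(z), f(z)\rangle = q(z,\bar z)\langle Jz, z\rangle$ for a bihomogeneous real polynomial $q$, where $J = \operatorname{diag}(1,1,-1)$ and $V = \operatorname{diag}(1,1,-1,-1)$ define the two hyperquadrics. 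Polarizing (replacing $\bar z$ by an independent conjugate variable) yields the identity
\[
\langle V f(z), f(w)\rangle = q(z,\bar w)\,\langle Jz, w\rangle ,
\]
with $\langle\cdot,\cdot\rangle$ the standard pairing; everything below is extracted from this single relation.

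Next I would fix $w=w_0 \neq 0$. Since $J$ is invertible, $z \mapsto \langle Jz, w_0\rangle$ is a nonzero linear form, vanishing exactly along a projective line $\Sigma_{w_0} \subset \bP^2$ --- the Segre variety of $w_0$. The identity then forces $\langle V f(z), f(w_0)\rangle = 0$ for every $z \in \Sigma_{w_0}$, i.e.\ $f(\Sigma_{w_0})$ lies in the hyperplane $H_{f(w_0)} := \{Z \in \bP^3 : \langle VZ, f(w_0)\rangle = 0\}$. Because $J$ is invertible, the assignment $w_0 \mapsto \Sigma_{w_0}$ is a conjugate-linear isomorphism $\bP^2 \to \bP^{2*}$, so \emph{every} line of $\bP^2$ occurs among the $\Sigma_{w_0}$, and hence $f$ carries every line into a plane: $f$ is planar. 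Reading off the dual map, $f^*(\Sigma_{w_0}) = H_{f(w_0)}$; writing $\varphi\colon \bP^2 \to \bP^{2*}$, $w_0 \mapsto \Sigma_{w_0}$ and $\psi\colon \bP^3 \to \bP^{3*}$, $Z \mapsto H_Z$ for the conjugate-linear isomorphisms induced by $J$ and $V$, this says exactly $\psi \circ f = f^* \circ \varphi$. As $\varphi$ and $\psi$ are conjugate-linear isomorphisms, $f$ and $f^*$ are conjugate isomorphic, so $f$ is self-dual.

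The delicate point is the phrase ``reading off the dual'': $f^*$ and its domain $U^*$ are defined via the \emph{unique} plane containing the image of a line, so one must verify that for a generic line $L$ the set $f(L \cap U)$ spans a genuine plane (is not contained in a line or a point), so that this unique plane is $H_{f(w_0)}$. Here I would use that $f$ is nonconstant together with the fact that $Q(2,0)$, being a sphere up to equivalence, contains no germ of a positive-dimensional complex variety; this prevents $f$ from degenerating along Segre lines over an open set, so the degeneracy locus is a proper analytic subset, off which the identification holds and then propagates by analytic continuation (or by rationality of $f$). The remaining checks --- that the conjugate-linear identifications of $\bP^2$ and $\bP^3$ with their dual spaces induced by $J$ and $V$ are precisely the maps $\varphi$, $\psi$ appearing in the definition of conjugate isomorphism, and that the polarized identity is legitimate --- are routine.
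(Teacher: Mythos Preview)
Your argument is correct and is essentially the paper's: both polarize the hyperquadric relation to see that $f$ carries each Segre line $\{\langle Jz,w\rangle=0\}$ into the plane $\{\langle V\zeta,f(w)\rangle=0\}$, and then identify the conjugate-linear isomorphisms exhibiting self-duality (the paper does this in explicit coordinates, you do it abstractly via $\varphi,\psi$). The only superfluous step is the appeal to Forstneri\v{c} --- the paper simply polarizes the implication $\langle Jz,z\rangle=0\Rightarrow\langle Vf(z),f(z)\rangle=0$ for the local holomorphic extension, never needing $f$ to be rational or the factor $q$ to exist globally as a polynomial.
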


\begin{proof}
Let $f \colon U \subset Q(2,0) \to Q(2,1)$ be a nonconstant real-analytic
CR map.  As $f$ is real-analytic, it extends to a holomorphic map of
a neighborhood of $U$ in $\C^2$ to $\C^3$.  Let us call this extension $f$
for simplicity.  As $f$ maps $Q(2,0)$ to $Q(2,1)$ then in homogeneous
coordinates we have
$\langle V f(z) , f(z) \rangle = 0$
whenever
$\langle J z , z \rangle = 0$, where $V$ defines $Q(2,1)$ and
$J$ defines $Q(2,0)$.  Then by polarization we note that the line
defined by
$\langle J z , w \rangle = 0$ for a fixed point $w$, gets mapped to the plane
defined by
$\langle V \zeta , f(w) \rangle = 0$.  Hence $f$ is a planar map.

Let $\xi_0,\xi_1,\xi_2$ be the homogeneous coordinates for $\bP^2$
and let the image by $f$ be the point
$\xi_0',\xi_1',\xi_2',\xi_3'$ in homogeneous coordinates for $\bP^3$.
Suppose that our nonhomogeneous coordinates correspond to
$z_1 = \frac{\xi_1}{\xi_0}$, 
$z_2 = \frac{\xi_2}{\xi_0}$, and for $\bP^3$
we have
$z_1' = \frac{\xi_1'}{\xi_0'}$, 
$z_2' = \frac{\xi_2'}{\xi_0'}$,
$z_3' = \frac{\xi_3'}{\xi_0'}$.

We let $w_1 = \frac{\eta_1}{\eta_0}$, etc\ldots  
By polarization
the statement
that $f$ takes $Q(2,0)$ to $Q(2,1)$ becomes:
\begin{equation} \label{xieta1}
\xi_0 \bar{\eta}_0 - \xi_1 \bar{\eta}_1 - \xi_2 \bar{\eta}_2 = 0
\end{equation}
implies
\begin{equation} \label{xieta2}
\xi_0' \bar{\eta}_0' + \xi_1' \bar{\eta}_1'
- \xi_2' \bar{\eta}_2' - \xi_3' \bar{\eta}_3' = 0 .
\end{equation}
The equation \eqref{xieta1} defines a line.  In dual coordinates
the line is $(\bar{\eta}_0,{-\bar{\eta}_1},{-\bar{\eta}_2})$.  Similarly
\eqref{xieta2} defines a plane, in dual coordinates
$(\bar{\eta}_0',\bar{\eta}_1',{-\bar{\eta}_2'},{-\bar{\eta}_3'})$.
$f^*$ takes
$(\bar{\eta}_0,{-\bar{\eta}_1},{-\bar{\eta}_2})$ to
$(\bar{\eta}_0',\bar{\eta}_1',{-\bar{\eta}_2'},{-\bar{\eta}_3'})$.
Hence we have defined the conjugate isomorphisms 
$\psi$ and $\varphi$ such that $\psi f^* = f \varphi$.  That is,
$f$ is self-dual.
\end{proof}

Faran proved the following complete classification of planar maps.
The designations of non-degenerate, partially degenerate, developable, degenerate
and flat maps are not relevant to us, but are kept for consistency.
We say two maps $f,g \colon U \subset \bP^2 \to \bP^3$ are equivalent
if there exist $\tau \in \operatorname{Aut}(\bP^3)$ and
$\chi \in \operatorname{Aut}(\bP^2)$
if $f \circ \chi = \tau \circ g$.

\begin{thm}[Faran~\cite{Faran:B2B3}] \label{thm:planar}
Let $f \colon U \subset \bP^2 \to \bP^3$ be a planar immersion.  Then
$f$ is equivalent to one of the following maps.

\begin{enumerate}[A.]
\item Non-degenerate maps.
\begin{enumerate}[1.]
\item $(z_0, z_1, z_2) \mapsto (z_0^3, z_1^3, z_2^3, z_0 z_1 z_2)$,
\item
\begin{enumerate}[a)]
\item $(z_0, z_1, z_2) \mapsto
(z_0^2 + \frac{3}{4} z_1^2 + \frac{3}{4} z_2^2,
 z_0 z_1 + \frac{1}{2} z_2^2,
 z_0 z_2 + \frac{1}{2} z_1^2,
 z_1 z_2)$,
\item $(z_0, z_1, z_2) \mapsto
(z_0^2 - \frac{15}{16} z_1^2 - \frac{3}{4} z_2^2,
 z_0 z_1 + \frac{1}{2} z_2^2,
 z_0 z_2 + \frac{1}{2} z_1^2,
 z_1 z_2)$,
\item $(z_0, z_1, z_2) \mapsto
(z_0^2,
 z_0 z_1 + \frac{1}{2} z_2^2,
 z_0 z_2 + \frac{1}{2} z_1^2,
 z_1 z_2)$.
\end{enumerate}
\end{enumerate}
\item Partially degenerate maps.
\begin{enumerate}[1.]
\item
\begin{enumerate}[a)]
\item Dual to A.2.a.
\item Dual to A.2.b.
\item Dual to A.2.c.
\end{enumerate}
\item $(z_0, z_1, z_2) \mapsto (z_0^2, z_1^2, z_0 z_2, z_1 z_2)$,
\item $(z_0, z_1, z_2) \mapsto
(z_0^2, z_0 z_1, z_0 z_2 + \frac{1}{2} z_1^2, z_1 z_2)$,
\end{enumerate}
\item Developable map
$(z_0, z_1, z_2) \mapsto (z_0^2, z_1^2, z_1 z_2, z_2^2)$.
\item Degenerate maps
$(z_0, z_1, z_2) \mapsto (z_0, z_1, z_2, z_0 g(z_1/z_0, z_2/z_0))$
for any function $g$
\item Flat maps
$(z_0, z_1, z_2) \mapsto (1, g_1(z_1/z_0, z_2/z_0), g_2 (z_1/z_0, z_2/z_0), 0)$
for any functions $g_1$ and $g_2$.
\end{enumerate}
Except for degenerate and flat maps, no one of the above maps is equivalent
to another.
\end{thm}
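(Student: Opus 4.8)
The plan is to classify planar immersions $f\colon U\subset\bP^2\to\bP^3$ up to the action of $\operatorname{Aut}(\bP^2)\times\operatorname{Aut}(\bP^3)$, following the approach of Faran~\cite{Faran:B2B3} through the method of moving frames. The argument splits into three stages: first peel off the strata where $f$ is planar for a trivial reason; second, on the remaining (``non-degenerate'') locus, run Cartan's method of equivalence to force $f$ to be a polynomial map of degree at most $3$; third, within degree at most $3$, reduce planarity to a linear condition on a four-dimensional linear system and finish by a finite case analysis. Throughout I would use the duality involution $f\mapsto f^{*}$---an involution, since $f^{**}=f$---to pair up cases, which is what forces, for example, B.1.a--c to be the duals of A.2.a--c, and so to cut the bookkeeping roughly in half.

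First, the trivial strata. If three of the components of $f$ already separate the points of $\bP^2$, then after composing with an element of $\operatorname{Aut}(\bP^2)$ we may take $f=(z_0,z_1,z_2,z_0 g(z_1/z_0,z_2/z_0))$ for an arbitrary function $g$; planarity is automatic here, since the line $\{az_0+bz_1+cz_2=0\}$ is carried into the hyperplane $\{a\xi_0+b\xi_1+c\xi_2=0\}$ of $\bP^3$, and these are the degenerate maps D. If the image of $f$ spans only a plane of $\bP^3$, then after a projective change of target coordinates $f=(1,g_1,g_2,0)$ for arbitrary functions $g_1,g_2$---the flat maps E. In all remaining cases, after clearing common factors the components of $f$ are homogeneous forms of a common degree $d\ge 2$ and the image $S=f(U)$ is a genuine surface not contained in a plane.

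The heart of the proof is to show that $d\le 3$. If $S$ is a cone or a tangent developable this is elementary; otherwise $f$ is non-degenerate, and I would adapt a projective frame in $\bP^3$ to $f$ at a point and along the image curves of the lines of $\bP^2$ through that point, write down the Maurer--Cartan form of $\operatorname{Aut}(\bP^3)$, and impose the requirement that $f$ carry each line into a plane. This triggers a cascade of normalizations in the structure equations: at each step one either pins a differential invariant to a universal constant---so that the underlying model becomes projectively homogeneous---or else isolates a scalar invariant that will index a one-parameter family. Carrying this through shows that the generic plane section $f(\ell)$ is a conic or a plane cubic, that only finitely many constants together with at most one free scalar survive, and hence that $f$ is algebraic; the bound $d\le 3$ then comes out of the normalization itself (alternatively one can see low degree directly from the fact that the image curves of two incident lines lie in two planes whose intersection is a single line, so those curves can meet each other only in few points). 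This moving-frame reduction carries essentially all the difficulty of the theorem and is the step I expect to be the main obstacle.

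Once $f$ is known to be given by a four-dimensional space $W$ of forms of degree $d\le 3$ in $(z_0,z_1,z_2)$, planarity becomes exactly the statement that for every line $\ell\subset\bP^2$ the restricted system $W|_{\ell}$ has dimension at most $3$ inside the $(d+1)$-dimensional space of forms on $\ell$---that is, the restriction matrix drops rank on every line. For $d=3$ a short analysis singles out, up to equivalence, the (self-dual) map $(z_0^3,z_1^3,z_2^3,z_0z_1z_2)$: case A.1. For $d=2$, $W$ is a four-dimensional subspace of the six-dimensional space of conics, so $f$ factors as the quadratic Veronese map $\sZ_2\colon\bP^2\to\bP^5$ followed by the linear projection centered at the line $\bP(\ann W)\subset\bP^5$, and the equivalence class of $f$ is governed by the position of that center line relative to the Veronese surface $\sZ_2(\bP^2)$ and its secant (chordal) cubic, together with the base locus of $W$ on $\bP^2$; a finite analysis of the possible incidence types yields precisely A.2.a--c, their duals B.1.a--c, and the maps B.2, B.3, and C. Finally, all of the discrete data attached to these maps---the degree, the rank of the Gauss map, the type of ruling, the nature of the base points, the position of the projection center relative to the Veronese---are invariant under $\operatorname{Aut}(\bP^2)\times\operatorname{Aut}(\bP^3)$, so comparing them shows that no two maps on the list are equivalent, apart from the unavoidable parametrization by arbitrary functions among the degenerate maps D and the flat maps E; this completes the classification.
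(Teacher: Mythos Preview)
The paper does not contain a proof of this theorem: it is quoted as a result of Faran~\cite{Faran:B2B3} and used as a black box in \S\ref{section:proofofhqclass}, so there is no ``paper's own proof'' to compare against. Your sketch is a reasonable outline of Faran's original moving-frames argument, and the overall architecture---strip off the flat and degenerate strata, use the structure equations to bound the degree, then do a finite case analysis on $4$-dimensional linear systems of forms of degree $\le 3$---is the right shape; but since the present paper neither reproduces nor summarizes that argument, any detailed assessment of your proposal would have to be made against \cite{Faran:B2B3} rather than against this paper.
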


Two CR maps of hyperquadrics could be equivalent, but not Q-equivalent.
Hence, to use the classification of planar maps for the CR hyperquadric maps, we
must first check if the class of planar maps contains a CR
hyperquadric map, and
second if the class contains other non-Q-equivalent CR hyperquadric maps.

We can apply our result for degree-two maps to handle all the degree-two
cases.  Hence we only need to study cases A.1, D, and E\@.  Case A.1 is handled
by the following Lemma.

\begin{lemma} \label{lemma:deg3}
Suppose that a homogeneous polynomial map of $\bP^2 \to \bP^3$ induced by
$f \colon Q(2,0) \mapsto Q(2,1)$ is equivalent to
$(z_0, z_1, z_2) \mapsto (z_0^3, z_1^3, z_2^3, z_0 z_1 z_2)$.  Then
$f$ is Q-equivalent to
$(z,w) \mapsto \left( \frac{2w^3}{3z^2+1} ,
\frac{z^3+3z}{3z^2+1},
\sqrt{3}\frac{wz^2-w}{3z^2+1}
\right)$.
\end{lemma}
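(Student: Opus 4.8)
The proof is carried out entirely on the level of Hermitian forms, as in \S\ref{section:hermforms}. Put $g_0 := (z_0^3,z_1^3,z_2^3,z_0z_1z_2)$. By hypothesis the homogenization $\hat f$ of $f$ equals $\tau\circ g_0\circ\chi$ for some $\tau\in\mathrm{GL}(4,\C)$, $\chi\in\mathrm{GL}(3,\C)$; the four cubic monomials comprising $g_0$ are linearly independent and $\chi$ is invertible, so $\hat f$ has linearly independent components and Lemma~\ref{formprop}, together with the corollary following it, applies, reducing Q-equivalence to equivalence of the associated Hermitian forms. The condition that $f$ map $Q(2,0)$ into $Q(2,1)$ reads $\langle V\hat f(z),\hat f(z)\rangle=q(z,\bar z)\langle Jz,z\rangle$ for a Hermitian form $q$ of bidegree $(2,2)$; substituting $u=\chi z$ turns this into
\begin{equation}\label{eq:lemdeg3}
\langle\widetilde V\,g_0(u),g_0(u)\rangle=\widetilde q(u,\bar u)\,\langle\widetilde J u,u\rangle ,
\end{equation}
with $\widetilde V:=\tau^*V\tau$ Hermitian of signature $(2,2)$ on $\C^4$, $\widetilde J:=(\chi^{-1})^*J\chi^{-1}$ Hermitian of signature $(2,1)$ on $\C^3$, and $\widetilde q$ Hermitian. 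Tracing through the definitions, $f$ is Q-equivalent to another CR map of this class precisely when the two pairs $(\widetilde V,\widetilde J)$ agree up to a positive scalar and up to the action of the symmetry group $T\cong(\C^*)^2\rtimes S_3$ of $g_0$ (a diagonal rescaling or permutation of the $z_j$, acting on $\widetilde J$ directly and on $\widetilde V$ through the corresponding linear map of the target of $g_0$). So everything reduces to showing the pairs $(\widetilde V,\widetilde J)$ of the prescribed signatures solving \eqref{eq:lemdeg3} form a single $T$-orbit up to scaling; an easy computation realizes the displayed map as a member of this class ($=\tau'\circ g_0\circ\chi'$ for explicit linear $\tau',\chi'$), whence $f$ will be Q-equivalent to it.

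To pin down $(\widetilde V,\widetilde J)$ I would restrict \eqref{eq:lemdeg3} to each of the three coordinate planes $\{u_j=0\}$, on which $g_0$ collapses to a pair of pure cubes. On $\{u_2=0\}$, say, the left side of \eqref{eq:lemdeg3} becomes the binary form $v_{00}|u_0|^6+v_{11}|u_1|^6+2\Re(v_{01}u_0^3\bar u_1^3)$, and $\langle\widetilde Ju,u\rangle|_{\{u_2=0\}}$ is a bidegree-$(1,1)$ Hermitian divisor of it; analysing this (the divisor has rank $1$ or $2$, and matching it against the cube form forces the $2\times2$ pure-cube Gram block $\left[\begin{smallmatrix}v_{00}&v_{01}\\\bar v_{01}&v_{11}\end{smallmatrix}\right]$ into one of a few shapes, correlated with the corresponding block of $\widetilde J$) constrains both matrices over each plane. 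Assembling the three planes and then imposing the signature conditions on $\widetilde V$ and $\widetilde J$ eliminates all but one configuration up to $T$: for instance the simultaneously diagonal configuration forces $\widetilde V=\operatorname{diag}(1,1,-1,3)$ up to scale, which has signature $(3,1)$, not $(2,2)$. The last parameters to fix are the entries of $\widetilde V$ pairing $z_0z_1z_2$ with the pure cubes; for these I would plug back into \eqref{eq:lemdeg3} off the coordinate planes, and also invoke Lemma~\ref{lemma:selfdual} together with the elementary computation that the dual of $g_0$ is $g_0$ composed with $\operatorname{diag}(1,1,1,-3)$, which rigidifies $\widetilde J$ relative to $\widetilde V$ and leaves a unique pair modulo $T$ and scaling.

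The main obstacle is exactly this last step. The quotient $\widetilde q$ in \eqref{eq:lemdeg3} has bidegree $(2,2)$ and can absorb a good deal, so divisibility is not a single linear condition on $\widetilde V$, and $z_0z_1z_2$ couples to $z_j^3$ only through mixed monomials of shape $u_j^2\bar u_k\bar u_l$, which must be tracked individually rather than through subspace restrictions. Once $(\widetilde V,\widetilde J)$ is normalized, the remainder is the routine linear algebra of \S\ref{section:deg2maps} and Lemma~\ref{lemma:deg2hq23}: bring $\widetilde J$ to $\operatorname{diag}(1,1,-1)$ by changing source coordinates, write $\widetilde V$ as a signed sum of rank-one matrices to extract the representative map $(f_0,\dots,f_3)$ with $f_j=\sqrt{|\lambda_j|}\,v_j^*g_0$, pass to nonhomogeneous coordinates, and verify that after an automorphism of $Q(2,1)$ the result is $\left(\tfrac{2w^3}{3z^2+1},\tfrac{z^3+3z}{3z^2+1},\sqrt3\tfrac{wz^2-w}{3z^2+1}\right)$.
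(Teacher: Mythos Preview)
Your approach is different from the paper's, and the obstacle you flag in the last paragraph is genuine: you have not actually closed the argument, because tracking the mixed entries $v_{j3}$ through the divisibility condition by hand is exactly as unpleasant as you suspect. The paper avoids this entirely by reorganizing around self-duality rather than treating it as an afterthought.

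The key difference is how the symmetry group is used. You correctly identify $T\cong(\C^*)^2\rtimes S_3$ as the group governing Q-\emph{equivalence} of pairs $(\widetilde V,\widetilde J)$, and then try to solve for all such pairs directly. The paper instead observes that the set of \emph{all} conjugate-isomorphism pairs $(\psi_2,\psi_3)$ satisfying $g_0^*\,\psi_2=\psi_3\,g_0$ is a torsor under $\operatorname{Aut}(g_0)$: given any two such pairs, $(\psi_2^{-1}\psi_2',\,\psi_3^{-1}\psi_3')$ lies in $\operatorname{Aut}(g_0)$. Since one pair is already known (the sphere-to-sphere one from Faran), every admissible $(\widetilde J,\widetilde V)$ is obtained from it by applying an explicit element of $\operatorname{Aut}(g_0)$. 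The Jacobian of $g_0$ drops rank precisely at the three coordinate points, so $\operatorname{Aut}(g_0)$ is the diagonal torus extended by $S_3$; one then simply runs through the permutation components, checking which produce Hermitian forms of signatures $(2,1)$ and $(2,2)$. The identity component yields only sphere maps, the $3$-cycles give non-Hermitian forms, and a single transposition gives the $Q(2,0)\to Q(2,1)$ map. This proves transitivity of $T$ on solution pairs \emph{a priori} and manufactures $\widetilde V$ in one stroke, never touching the individual cross-terms that stalled your argument.
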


\begin{proof}
Faran computes that $f$ is self-dual.  After changing coordinates
the map
\begin{equation}
(z_0, z_1, z_2) \mapsto (z_0^3, z_1^3, z_2^3, \sqrt{3} \, z_0 z_1 z_2)
\end{equation}
takes sphere to the sphere.
By Lemma~\ref{lemma:selfdual}, $f$ is self-dual and it is not hard to compute
that the dual of $f$ is
$(\eta_0, \eta_1, \eta_2) \mapsto (\eta_0^3, \eta_1^3, \eta_2^3, - \sqrt{3}
\, \eta_0 \eta_1 \eta_2)$.

If we have conjugate isomorphisms
$\psi_n \colon \bP^n \to \bP^{n*}$ such that
$f^* \psi_2 = \psi_3 f$, then
$f$ takes the set $z \cdot \psi_2(z) = 0$ into
the set
$w \cdot \psi_3(w) = 0$.
These zero sets
are hyperquadrics if the corresponding forms are Hermitian.  We then check
the signature of the forms.
Thus to find all ways that $f$ maps a hyperquadric to a hyperquadric,
we need only find all the ways that $f$ is conjugate isomorphic to $f^*$.

Above we found
\begin{equation}
\psi_2(z) = (-\bar{z}_0,\bar{z}_1,\bar{z}_2), \qquad
\psi_3(w) = (-\bar{z}_0,\bar{z}_1,\bar{z}_2,\bar{z}_3),
\end{equation}
which give the CR map of spheres.

Suppose that
$(\psi_2,\psi_3)$ and
$(\psi_2',\psi_3')$
are conjugate isomorphisms of $f$ and $f^*$
that give Hermitian forms (that is, give maps of hyperquadrics).
We have $f^* = \psi_3 f \psi_2^{-1} = \psi_3' f \psi_2'^{-1}$, hence
$f \psi_2^{-1} \psi_2' = \psi_3^{-1} \psi_3' f$.  Therefore
the pair
\begin{equation}
(\alpha_2, \alpha_3) =
(\psi_2^{-1} \psi_2', \psi_3^{-1} \psi_3')
\end{equation}
is a pair of automorphisms of the source and target that fix $f$, that is
$f \alpha_2 = \alpha_3 f$.  We 
denote by $\operatorname{Aut}(f)$ the
set of such pairs
$(\alpha_2, \alpha_3)$.  We have found $(\psi_2,\psi_3)$ above,
hence by
computing $\operatorname{Aut}(f)$ we find all hyperquadrics that $f$
takes to other hyperquadrics.  We need only check they are of the right
signature and that we do not get Q-equivalent maps.

Now we note that the Jacobian of $f$ drops rank at precisely the coordinates
$(1,0,0)$, $(0,1,0)$, $(0,0,1)$ in $\bP^2$.  So any automorphism preserves
these points.  Any permutation of the source variables
$(z_0,z_1,z_2)$ is part of an automorphism of $f$.  Hence any automorphism
of $f$ is a composition of a permutation of the variables and the map
\begin{equation}
(z_0,z_1,z_2 \, ; \, 
w_0,w_1,w_2,w_3)
\mapsto
(az_0,bz_1,cz_2 \, ; \, 
a^3w_0,b^3w_1,c^3w_2,abcw_3) .
\end{equation}
Here the first three components (in the $z_j$ variables)
of the map represent the self map of
$\bP^2$ and the last four components (in the $w_k$ variables) represent
the self map of $\bP^3$.

Now we need to check all the permutations of variables to see what maps
we get by considering different elements of
$\operatorname{Aut}(f)$, which has 6 components (corresponding to the
permutations).  However, due to symmetry in the map, we find that we need only
check the identity, the permutation of two variables, and the cyclic
permutation.

Faran computes that the identity component gives only maps Q-equivalent to
CR map of spheres.  It is also possible to use the computation below to show the
same result.  The cyclic permutation does not give Hermitian forms and hence
does not give CR maps to hyperquadrics.

Hence we need only check the permutation of two variables.  For example,
\begin{equation}
(z_0,z_1,z_2 \, ; \, 
w_0,w_1,w_2,w_3)
\mapsto
(az_2,bz_1,cz_0 \, ; \, 
a^3w_2,b^3w_1,c^3w_0,abcw_3) .
\end{equation}
The $\psi_2'$ and $\psi_3'$ now give the forms
\begin{align}
& z \cdot \psi_2'(z) = -\bar{a} z_0\bar{z}_2+\bar{b}z_1\bar{z}_1
+ \bar{c} z_2 \bar{z}_0, \\
& w \cdot \psi_3'(w) = -\bar{a}^3 w_0\bar{w}_2+\bar{b}^3w_1\bar{w}_1
+ \bar{c}^3 w_2 \bar{w}_0 + \bar{a} \bar{b} \bar{c} w_3 \bar{w}_3.
\end{align}
These forms are Hermitian if $b \in \R$ and $c = -\bar{a}$.  The first form
has two positive eigenvalues only if $b > 0$.  We can now change variables
$z \mapsto (c^{-1}z_0, b^{-1/2}z_1, z_2)$
to eliminate the variables $a,b,c$ from
the problem.  That is we have
\begin{align}
& z \cdot \psi_2'(z) = z_0\bar{z}_2+z_1\bar{z}_1 + z_2 \bar{z}_0, \\
& w \cdot \psi_3'(w) = w_0\bar{w}_2+z_1\bar{w}_1 + w_2 \bar{w}_0 - w_3 \bar{w}_3.
\end{align}
The forms are of correct signature and hence we have a map of $Q(2,0)$ to
$Q(2,1)$.

As a side note,
we get the
same map that we obtain by looking at the real-algebraic
version of the monomial problem.  Take the negative of
the homogenized version of the
degree-3 monomial map.  We have that
\begin{equation}
p(x,y,t) = x^3+ y^2 + t^3 - 3 xyt \quad \text{ is zero on }
\quad x+y+t = 0 .
\end{equation}
Now we compute $p(z_0\bar{z}_2, z_1\bar{z}_1, z_2\bar{z}_0)$ and this will
give the form
$w \cdot \psi_3'(w)$ after removing the Veronese map.  Hence, even this
non-monomial map comes about from a monomial map.

After changing variables we get the map (in nonhomogeneous coordinates)
\begin{equation} \label{deg3hq23}
(z,w) \mapsto \left( \frac{2w^3}{3z^2+1} ,
\frac{z^3+3z}{3z^2+1},
\sqrt{3}\frac{wz^2-w}{3z^2+1}
\right).
\end{equation}

We found no other maps and hence all degree-3 maps of $Q(2,0)$ to $Q(2,1)$
are Q-equivalent to the map \eqref{deg3hq23}.
\end{proof}

\begin{proof}[Proof of Theorem~\ref{thm:hqclass}]
Let $f \colon U \subset Q(2,0) \to Q(2,1)$ be a nonconstant real-analytic
CR map.  As $f$ is real-analytic, it extends to a holomorphic map of
a neighborhood of $U$ in $\C^2$ to $\C^3$.  Let us call this extension $f$
for simplicity.  The map $f$ is planar by Lemma~\ref{lemma:selfdual}.

First suppose that
the derivative
of the map is at most 1.  Then the derivative must be of rank 1 at generic
points of a neighbourhood of $U$.  That is, at a generic point,
$f$ takes a germ of $\C^2$ into a germ of a one-dimensional subvariety of
$\C^3$.  This behaviour occurs on the complement of a complex analytic set
in $\C^2$.  Therefore, there exists a point $p \in U \subset Q(2,0)$
such that $f$ takes a
neighborhood of $p$
in $\C^2$ into a one-dimensional complex subvariety in $\C^3$.
It is easy to
see that $f$ takes a neighborhood $W$ of $p$ in $\C^2$ into $Q(2,1)$.  If 
$f(W) \cap Q(2,1)$ would be of less than
two real dimensions, then we could pull back a point
and get a complex subvariety of $Q(2,0)$, which is impossible.

Therefore, $f$ takes a neighborhood of $p$ into an irreducible
complex subvariety contained in $Q(2,1)$.  After a change of coordinates on
the target side
we can assume that $f(p) = (1,0,0)$
and the complex variety inside $Q(2,1)$ that contains the image of $f$
is given by $\{ z \in \C^3 \mid z_2 = z_3, z_1 = 1 \}$.
We immediately get that $f$ is Q-equivalent to the map
$(z,w) \mapsto (1,g,g)$ for some CR function $g$.

So suppose that at generic points the rank of the derivative is 2.  Again as
as the rank can drop only on a complex variety, that means the rank of the
derivative must be 2 on an open and dense subset of $U \subset Q(2,0)$.
Taking a perhaps smaller neighbourhood $U$ we can simply assume that
the rank of the derivative is identically 2 and we can
assume that $f$ is an immersion.
We know that $f$ is equivalent to one of the maps in
Theorem~\ref{thm:planar}.  However, as we said before,
the type of equivalence is not quite correct (it is not Q-equivalence).
We need to check each class if it contains a CR map of hyperquadrics.  If
it does, we need to check if it contains several non-Q-equivalent maps.

If $f$ is of type E, then it must map into a plane $P$.  $Q(2,1) \cap P$ 
is equivalent to $Q(2,0)$ or $S^1 \times \C$.
If $Q(2,1) \cap P \cong Q(2,0)$ then $f$ must be linear.  If
$Q(2,1) \cap P \cong S^1 \times \C$ then $f$ cannot be an immersion
(essentially
because the inverse image of $\{ p \} \times \C$ cannot be contained in
a sphere).  We have already handled this case.

If $f$ is of type $D$, then $f$ is not self-dual by an easy computation.
Being self-dual is a requirement for maps to take a hyperquadric
to a hyperquadric, so $f$ is not equivalent to any map of hyperquadrics.
See Faran~\cite{Faran:B2B3} for more details.

Therefore, $f$ must be rational of degree less than or equal to
$3$. If $f$ is linear then it is obviously Q-equivalent to $(z,w) \mapsto
(z,w,0)$.
If $f$ is of degree two, we apply Lemma~\ref{lemma:deg2hq23}.
If $f$ is of degree three,
we apply Lemma~\ref{lemma:deg3}.
\end{proof}


\def\MR#1{\relax\ifhmode\unskip\spacefactor3000 \space\fi%
  \href{http://www.ams.org/mathscinet-getitem?mr=#1}{MR#1}}

\begin{bibdiv}
\begin{biblist}

\bib{BER:book}{book}{
      author={Baouendi, M.~Salah},
      author={Ebenfelt, Peter},
      author={Rothschild, Linda~Preiss},
       title={Real submanifolds in complex space and their mappings},
      series={Princeton Mathematical Series},
   publisher={Princeton University Press},
     address={Princeton, NJ},
        date={1999},
      volume={47},
        ISBN={0-691-00498-6},
      review={\MR{1668103}},
}

\bib{BEH}{article}{
   author={Baouendi, M. S.},
   author={Ebenfelt, Peter},
   author={Huang, Xiaojun},
   title={Super-rigidity for CR embeddings of real hypersurfaces into
   hyperquadrics},
   journal={Adv.\ Math.},
   volume={219},
   date={2008},
   number={5},
   pages={1427--1445},
   issn={0001-8708},
   review={\MR{2458142}},
}

\bib{BH}{article}{
   author={Baouendi, M. S.},
   author={Huang, Xiaojun},
   title={Super-rigidity for holomorphic mappings between hyperquadrics with
   positive signature},
   journal={J.\ Differential Geom.},
   volume={69},
   date={2005},
   number={2},
   pages={379--398},
   issn={0022-040X},
   review={\MR{2169869}},
}

\bib{DAngelo:CR}{book}{
      author={D'Angelo, John~P.},
       title={Several complex variables and the geometry of real
  hypersurfaces},
      series={Studies in Advanced Mathematics},
   publisher={CRC Press},
     address={Boca Raton, FL},
        date={1993},
        ISBN={0-8493-8272-6},
      review={\MR{1224231}},
}

\bib{D:prop1}{article}{
   author={D'Angelo, John~P.},
   title={Proper holomorphic maps between balls of different dimensions},
   journal={Michigan Math.\ J.},
   volume={35},
   date={1988},
   number={1},
   pages={83--90},
   issn={0026-2285},
   review={\MR{931941}},
}

\bib{D:poly}{article}{
   author={D'Angelo, John~P.},
   title={Polynomial proper holomorphic mappings between balls. II},
   journal={Michigan Math.\ J.},
   volume={38},
   date={1991},
   number={1},
   pages={53--65},
   issn={0026-2285},
   review={\MR{1091509}},
}

\bib{D:ginv}{article}{
   author={D'Angelo, John~P.},
   title={Invariant holomorphic mappings},
   journal={J.\ Geom.\ Anal.},
   volume={6},
   date={1996},
   number={2},
   pages={163--179},
   issn={1050-6926},
   review={\MR{1469120}},
}

\bib{DKR}{article}{
   author={D'Angelo, John~P.},
   author={Kos, {\v{S}}imon},
   author={Riehl, Emily},
   title={A sharp bound for the degree of proper monomial mappings between
   balls},
   journal={J.\ Geom.\ Anal.},
   volume={13},
   date={2003},
   number={4},
   pages={581--593},
   issn={1050-6926},
   review={\MR{2005154}},
}

\bib{DLP}{article}{
   author={D'Angelo, John~P.},
   author={Lebl, Ji{\v{r}}{\'{\i}}},
   author={Peters, Han},
   title={Degree estimates for polynomials constant on a hyperplane},
   journal={Michigan Math.\ J.},
   volume={55},
   date={2007},
   number={3},
   pages={693--713},
   issn={0026-2285},
   review={\MR{2372622}},
}

\bib{DL:families}{article}{
      author={D'Angelo, John~P.},
      author={Lebl, Ji{\v r}\'i},
       title={On the complexity of proper holomorphic mappings between balls},
     journal={Complex Var.\ Elliptic Equ.},
      volume={54},
        year={2009},
      number={2--3},
       pages={187--204},
      review={\MR{2513534}},
}
\bib{DL:complex}{article}{
      author={D'Angelo, John~P.},
      author={Lebl, Ji{\v r}\'i},
      title={Complexity results for CR mappings between spheres},
      journal={Internat.\ J.\ Math.},
      volume={20},
      number={2},
      pages={149--166},
      year={2009},
      review={\MR{2493357}},
}

\bib{Faran:B2B3}{article}{
   author={Faran, James J.},
   title={Maps from the two-ball to the three-ball},
   journal={Invent.\ Math.},
   volume={68},
   date={1982},
   number={3},
   pages={441--475},
   issn={0020-9910},
   review={\MR{669425}},
}

\bib{FHJZ}{article}{
   author={Faran, James J.},
   author={Huang, Xiaojun},
   author={Ji, Shanyu},
   author={Zhang, Yuan},
   title={Polynomial and rational maps between balls},
   journal={Pure Appl. Math. Q.},
   volume={6},
   date={2010},
   number={3, Special Issue: In honor of Joseph J. Kohn.},
   pages={829--842},
   issn={1558-8599},
   review={\MR{2677315}},
}

\bib{Forstneric}{article}{
   author={Forstneri{\v{c}}, Franc},
   title={Extending proper holomorphic mappings of positive codimension},
   journal={Invent.\ Math.},
   volume={95},
   date={1989},
   number={1},
   pages={31--61},
   issn={0020-9910},
   review={\MR{969413 (89j:32033)}},
}

\bib{HornSergeichuk}{article}{
   author={Horn, Roger A.},
   author={Sergeichuk, Vladimir V.},
   title={Canonical forms for complex matrix congruence and *congruence},
   journal={Linear Algebra Appl.},
   volume={416},
   date={2006},
   number={2-3},
   pages={1010--1032},
   issn={0024-3795},
   review={\MR{2242477}},
}

\bib{HJX}{article}{
   author={Huang, Xiaojun},
   author={Ji, Shanyu},
   author={Xu, Dekang},
   title={A new gap phenomenon for proper holomorphic mappings from $B\sp n$
   into $B\sp N$},
   journal={Math.\ Res.\ Lett.},
   volume={13},
   date={2006},
   number={4},
   pages={515--529},
   issn={1073-2780},
   review={\MR{2250487}},
}

\bib{JiZhang}{article}{
   author={Ji, ShanYu},
   author={Zhang, Yuan},
   title={Classification of rational holomorphic maps from $\bB^2$
   into $\bB^{N}$ with degree 2},
   journal={Sci. China Ser. A},
   volume={52},
   date={2009},
   number={12},
   pages={2647--2667},
   issn={1006-9283},
   review={\MR{2577180}},
   doi={10.1007/s11425-009-0147-y},
}

\bib{LancasterRodman}{article}{
   author={Lancaster, Peter},
   author={Rodman, Leiba},
   title={Canonical forms for Hermitian matrix pairs under strict
   equivalence and congruence},
   journal={SIAM Rev.},
   volume={47},
   date={2005},
   number={3},
   pages={407--443 (electronic)},
   issn={0036-1445},
   review={\MR{2178635}},
}

\bib{Pincuk}{article}{
   author={Pin{\v{c}}uk, S. I.},
   title={Proper holomorphic maps of strictly pseudoconvex domains},
   language={Russian},
   journal={Sibirsk.\ Mat.\ \v Z.},
   volume={15},
   date={1974},
   pages={909--917, 959},
   issn={0037-4474},
   review={\MR{0355109}},
}

\bib{Wono}{thesis}{
    author = {Wono, Setya-Budhi Marcus},
     title = {Proper Holomorphic Mappings in Several Complex Variables},
      year = {1993},
      type = {Ph.D. Thesis},
    school = {University of Illinois at Urbana-Champaign}
}

\end{biblist}
\end{bibdiv}

\end{document}